\newtheorem{theorem}{Theorem}[section]
\newtheorem{lemma}[theorem]{Lemma}
\newtheorem{proposition}[theorem]{Proposition}
\newtheorem{corollary}[theorem]{Corollary}
\newtheorem{definition}[theorem]{Definition}
\newtheorem{assumption}{Assumption}
\theoremstyle{remark}
\newtheorem{remark}[theorem]{Remark}
\newcommand{\klr}[1]{\left(#1\right)}
\newcommand{\kle}[1]{\left[#1\right]}
\newcommand{\kls}[1]{\left|#1\right|}
\newcommand{\klg}[1]{\left\{#1\right\}}
\newcommand{\norm}[1]{\left \lVert #1 \right \rVert}
\newcommand\nat[0]{\mathbb{N}}
\newcommand{\citewithin}[2]{\protect{\cite[#1]{#2}}}
\setlist[enumerate]{label=\textnormal{(\roman*)}, font=\normalfont}
\def\eqref#1{\textnormal{(\textcolor{blue}{\ref{#1}})}}
\renewcommand{\cite}[2][]{\textnormal{\citep[#1]{#2}}}
\newcommand{\condref}[2]{\textnormal{(\textcolor{blue}{\hyperref[#1]{#2}})}}
\date{\today}
\begin{document}

\numberwithin{equation}{section}

\title[Strong Solutions for Singular SDEs with Long-Range Dependent fBM]{Strong solutions for singular SDEs driven by\\long-range dependent fractional Brownian motion\\and other Volterra processes} 

\author{Maximilian Buthenhoff}\thanks{}
\address{Maximilian Buthenhoff, Ruhr University Bochum, Institute for Theoretical
Physics III, Bochum, Germany}
\email{maximilian.buthenhoff\@@{}rub.de}

\author{Ercan S\"onmez}
\address{Ercan S\"onmez, Ruhr University Bochum, Faculty of Mathematics, Bochum, Germany}
\email{ercan.soenmez\@@{}rub.de}

\begin{abstract}
We investigate the well-posedness of stochastic differential equations driven by fractional Brownian motion, focusing on the long-range dependent case $H \in (\frac12, 1)$. While existing results on regularization by such noise typically require Hölder continuity of the drift, we establish new strong existence and uniqueness results for certain classes of singular drifts, including discontinuous and highly irregular functions. More generally, we treat stochastic differential equations with additive noise given by a broader class of Volterra processes satisfying suitable kernel conditions, which, in addition to fractional Brownian motion, also includes the Riemann–Liouville process as a special case. Our approach relies on probabilistic arguments.
\end{abstract}

\keywords{Fractional Brownian motion, Long-range dependence, Stochastic differential equations, Volterra processes.}
\subjclass[2010]{Primary 60H10; Secondary 60G22}
\maketitle

\allowdisplaybreaks

%
%
%
%
%
%
%
%
%
\section{Introduction}
Stochastic differential equations (SDEs) driven by fractional Brownian motion (fBm) have received significant attention due to both their theoretical significance and their applications in mathematical finance, physics and stochastic analysis. In particular, the theory of regularization by noise has been a highly active area of research: in certain cases, the addition of a stochastic perturbation of the form
\[
    \mathrm{d}X_t = b(t,X_t) \mathrm{d}t + \mathrm{d} B_t^H,
\]
where \( B_t^H \) is an fBm with Hurst parameter \( H\in (0,1) \), can restore well-posedness to otherwise ill-posed deterministic equations. This idea has been extensively studied when the noise is given by a standard Brownian motion or an fBm with Hurst parameter \( H < 1/2 \), where pathwise techniques play a crucial role. However, the case \( H > 1/2 \) with discontinuous drift function remains far less understood, despite its relevance in modeling long-range dependent systems in finance \cite{willinger1999stock} and physics \cite{kou2004generalized}.

A key development in the study of regularization by fBm was initiated by Nualart and Ouknine~\cite{nualart2002regularization}, who extended the foundational works of Zvonkin \cite{zvonkin1974transformation} on Brownian-driven SDEs and established a phase transition: when \( H \leq 1/2 \), the drift is \enquote{singular}, existence and uniqueness results only require a linear growth condition on the drift function, whereas for \( H > 1/2 \), the drift is \enquote{regular}, requiring at least H\"older continuity of the drift for classical well-posedness results. This phase transition was further refined by Catellier and Gubinelli \cite{catellier}, who extended these results to arbitrary dimensions. Since then, a substantial body of literature has emerged, further analyzing the interplay between fBm and singular drift. The stochastic sewing technique was introduced by Khoa L\^e \cite{le2020stochastic}, arguably one of the most influential papers in this domain. 

For \( H > 1/2 \) almost all results assume that the drift function is at least H\"older continuous, in stark contrast to the case \( H < 1/2 \), where significant progress has been made for more singular drifts, including those with mixed integrability conditions or interpreted as distributions.

The study of SDEs with discontinuous drift is highly challenging, with only a few known results if \( H > 1/2 \). Mishura and Nualart \cite{mishuranualart} considered the specific case where the drift is the right-continuous sign function (with the convention $\operatorname{sign}(0) = 1$), proving weak existence under the constraint \( H < (1+\sqrt{5})/4 \), which was later removed by Boufoussi and Ouknine \cite{boufoussi}.  However, all of these works assume that the drift is at least left- or right-continuous. More recently, Butkovsky et al.\ \cite{butkovsky0,butkovsky2} provided weak existence results in a broader setting involving drifts satisfying mixed integrability conditions. To the best of our knowledge, no results so far establish strong existence and uniqueness for SDEs driven by fBm with \( H > 1/2 \) and discontinuous drift (that is neither left- nor right-continuous).

In this paper, we aim to make progress in this direction by adopting a probabilistic approach to analyze SDEs with singular drifts in this regime. We consider more generally a broader class of Volterra processes, which admit an integral representation with respect to standard Brownian motion. Under certain integrability conditions on the variance function of such processes, we derive novel existence and uniqueness results. Prime examples include long-range dependent fBm and the well-known Riemann-Liouville process. We give well-posedness results for a collection of singular drift functions. In particular, our results extend the known regularization effects of fBm with \( H > 1/2 \) beyond the H\"older continuous setting.
\subsection{Framework and main results}

In this section, we introduce the precise framework in which we work and present our main results. We consider stochastic differential equations driven by a Volterra process. More precisely, throughout, let \( T \in (0,\infty) \), denote by $(\Omega, (\mathcal{A}_t)_{t\in [0,T]},\mathbb{P})$ a filtered probability space and let \( (W_t)_{t \in [0,T]} \) be a standard one-dimensional Brownian motion, where we assume that $(\mathcal{A}_t)_{t\in [0,T]}$ is the augmented filtration generated by \( (W_t)_{t \in [0,T]} \) satisfying the usual conditions. Let \( K \) be an \( L^2 \) kernel, i.e., a function \( K \colon [0,T] \times [0,T] \to \mathbb{R} \), \((t,s) \mapsto K(t,s)\), such that
\[
    \int_{0}^{T} \int_{0}^{T} K(t,s)^2 \mathrm{d}t \mathrm{d}s < \infty.
\]
For more background on Volterra processes, we refer to \cite{baudoinnualart}. Now, define the associated Volterra process \( (B^K_t)_{t \in [0,T]} \) given by
\[
    B^K_t = \int_0^t K(t,s) \mathrm{d} W_s, \quad t \in [0,T].
\]
Consider the SDE
\begin{align}\label{volterrasde}
\begin{split}
    \mathrm{d}X_t & = b(t,X_t) \mathrm{d}t + \mathrm{d} B^K_t, \quad t \in [0,T], \\
    X_0 & = x_0 ,
\end{split}
\end{align}
where \( b\colon [0,T] \times \mathbb{R} \to \mathbb{R} \) is a measurable function and $x_0 \in \mathbb{R}$ represents some initial condition. We will only need the following assumption regarding an estimate for the kernel function \( K \). This assumption will be imposed throughout this paper. For $t\in (0,T]$ define
\[
    \kappa^2_{\varepsilon} := \int_{t-\varepsilon}^{t} K^2(t,s) \mathrm{d} s, \quad \varepsilon \in (0,t).
\]
We assume that there exists a parameter \( H \in (0,1) \) and a constant \( c \in (0,\infty) \), independent of \( \varepsilon \) and $t$, such that
\begin{equation} \label{eq:kernel_assumption}
    \kappa_{\varepsilon} \geq c \varepsilon^H.
\end{equation}
As we shall see below, assumption \eqref{eq:kernel_assumption} is fulfilled in particular for the fractional Brownian motion with \( H > 1/2 \), but also for another prominent example.

Now we introduce the first main result from a collection of examples with singular drift, where we obtain strong existence results and well-posedness results, in particular a regularization effect on differential equations that are otherwise ill-posed without the noise. Our first example is motivated by the scenario where the drift \( b \) is the sign function with the convention \( \operatorname{sign}(0) = 0 \), and there are no further assumptions such as right- or left-continuity. We first impose the following assumption:

\begin{assumption} \label{ass:A1}
The function \( b(t,x) \) is piecewise uniformly continuous in the variable \( x \), with finitely many discontinuity points \( a_1, \dots, a_m \in \mathbb{R} \), where \( m \in \mathbb{N} \). Moreover, the function \( b(t,x) \) satisfies the linear growth condition
$$ |b(t, x)| \leq C (1 + |x|)$$
for every $x\in \mathbb{R}$, uniformly in $t\in [0,T]$.
\end{assumption}

\begin{theorem} \label{thm:strong_solution}
Consider the SDE \eqref{volterrasde}. Under Assumption \ref{ass:A1}, the SDE admits a strong solution.
\end{theorem}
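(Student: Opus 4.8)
The plan is to construct the solution by smooth approximation of the drift combined with a compactness argument, the regularizing effect of the noise entering through an occupation-time estimate derived from the kernel bound \eqref{eq:kernel_assumption}. First I would mollify $b$ in the spatial variable to obtain a sequence $b_n\colon[0,T]\times\mathbb{R}\to\mathbb{R}$ that is Lipschitz in $x$ (uniformly in $t$), retains the linear growth bound of Assumption \ref{ass:A1} up to a universal constant, and satisfies $b_n(t,x)\to b(t,x)$ for every $t$ and every $x\notin\{a_1,\dots,a_m\}$, with convergence uniform on compact sets avoiding a neighbourhood of the discontinuity set; this last property is exactly what piecewise uniform continuity provides. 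Since the noise is additive, for each fixed $\omega$ the equation with drift $b_n$ is a deterministic integral equation with a Lipschitz vector field and the continuous forcing $t\mapsto B_t^K(\omega)$, hence has a unique adapted solution $X^n$ by Picard iteration. A pathwise Gronwall argument, using only the linear growth and $\sup_t|B_t^K|$, yields $\sup_n\sup_{t\le T}|X_t^n(\omega)|\le R(\omega)<\infty$ and, after taking expectations, uniform moment bounds $\sup_n\mathbb{E}[\sup_{t\le T}|X_t^n|^p]<\infty$ for all $p\ge1$.

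The heart of the argument is the occupation-time estimate $\sup_n\int_0^T\mathbb{P}(|X_s^n-a_j|\le\delta)\,ds\to0$ as $\delta\downarrow0$, for each discontinuity point $a_j$, and this is where \eqref{eq:kernel_assumption} is used. Fixing $s$ and $\varepsilon\in(0,s)$, I would split $B_s^K=M_{s,\varepsilon}+N_{s,\varepsilon}$ with $M_{s,\varepsilon}=\int_0^{s-\varepsilon}K(s,u)\,dW_u$ being $\mathcal{A}_{s-\varepsilon}$-measurable and $N_{s,\varepsilon}=\int_{s-\varepsilon}^{s}K(s,u)\,dW_u$ independent of $\mathcal{A}_{s-\varepsilon}$ and Gaussian with variance $\kappa_\varepsilon^2\ge c^2\varepsilon^{2H}$. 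Writing $X_s^n=Y_{s,\varepsilon}+\int_{s-\varepsilon}^{s}b_n(u,X_u^n)\,du+N_{s,\varepsilon}$ with $Y_{s,\varepsilon}$ being $\mathcal{A}_{s-\varepsilon}$-measurable, I would bound, for a threshold $\rho>0$,
\[
\mathbb{P}(|X_s^n-a_j|\le\delta)\le \mathbb{P}\Big(\int_{s-\varepsilon}^{s}|b_n(u,X_u^n)|\,du>\rho\Big)+\mathbb{P}(|Y_{s,\varepsilon}+N_{s,\varepsilon}-a_j|\le\delta+\rho).
\]
The first term is at most $\rho^{-1}\int_{s-\varepsilon}^{s}C(1+\mathbb{E}|X_u^n|)\,du\le C\varepsilon/\rho$ by the moment bounds, while for the second I condition on $\mathcal{A}_{s-\varepsilon}$ and use that a Gaussian of variance $\ge c^2\varepsilon^{2H}$ charges any interval of length $2(\delta+\rho)$ by at most $C(\delta+\rho)/\varepsilon^{H}$. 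Choosing $\rho=\delta$ and then $\varepsilon\sim\delta^{2/(1+H)}$ balances the two contributions and gives $\mathbb{P}(|X_s^n-a_j|\le\delta)\le C\,\delta^{(1-H)/(1+H)}$, uniformly in $n$ and in $s$ bounded away from $0$; the exponent is positive precisely because $H<1$. Integrating in $s$, the region $s\approx0$ being handled by the trivial bound by $1$, yields the claim.

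With these ingredients I would conclude as follows. The uniform moment bounds make the drift integrals $A^n_\cdot=\int_0^\cdot b_n(s,X_s^n)\,ds$ equicontinuous and equibounded, so the laws of $X^n=x_0+B^K+A^n$ are tight in $C([0,T])$; passing to a subsequential limit $X$, the occupation-time estimate forces $\int_0^T\mathbf{1}\{X_s=a_j\}\,ds=0$ almost surely, hence $b(s,\cdot)$ is continuous at $X_s$ for Lebesgue-almost every $s$. Combined with the local uniform convergence $b_n\to b$ away from the discontinuities and dominated convergence (justified by the moment bounds and linear growth), this identifies $\lim_n A^n=\int_0^\cdot b(s,X_s)\,ds$ and shows that $X$ solves \eqref{volterrasde}. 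The main obstacle I anticipate is upgrading this to a genuine \emph{strong} solution on the original probability space: without pathwise uniqueness the Yamada–Watanabe route is unavailable, so I would instead exploit that the noise is additive and identical across the approximations. Concretely, the pathwise bound $\sup_n\sup_t|X^n_t(\omega)|\le R(\omega)$ makes $(A^n(\omega))_n$ relatively compact in $C([0,T])$ for almost every $\omega$ by Arzelà–Ascoli, and I would select an adapted limit via a measurable-selection argument applied to the multifunction of subsequential limits, verifying that the occupation estimate transfers $\omega$-wise so that each limit point is a solution. Checking adaptedness of the selected limit, and that a single subsequence serves almost every $\omega$, is the delicate point on which the strong conclusion of Theorem \ref{thm:strong_solution} ultimately hinges.
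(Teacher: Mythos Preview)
Your occupation-time estimate and the identification of the limiting drift integral are essentially the same ideas the paper uses: the paper decomposes $X_t$ via the ``conditionally Gaussian process'' $Y_t(\varepsilon)=X_{t-\varepsilon}+(B_t^K-B_{t-\varepsilon}^K)$, obtains $\mathbb{P}(X_t\in(x,x+\alpha))\le C\,t^{-H}\alpha^{1-H}$ (Lemma~\ref{lemma:bound-probability-small-interval}), and then proves a Krylov-type bound (Proposition~\ref{prop:krylov-bound}) and a stability result (Proposition~\ref{prop:helper-existence-strong}) that plays the role of your dominated-convergence step. Your exponent $(1-H)/(1+H)$ is weaker than the paper's $1-H$ because you use Markov's inequality on the drift integral rather than a deterministic bound, but this is inessential.

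The genuine gap is exactly the point you flag yourself: upgrading to a \emph{strong} solution. Your plan---pathwise Arzel\`a--Ascoli for $A^n(\omega)$ followed by a measurable selection of an adapted limit---does not work as stated. Relative compactness of $(A^n(\omega))_n$ gives, for each $\omega$, a convergent subsequence depending on $\omega$; there is no mechanism here to produce a single subsequence converging for almost every $\omega$, and measurable selection of a limit point from the (set-valued) accumulation map does not yield an \emph{adapted} process, because the selection at time $t$ may peek at the whole path of $B^K$. Moreover, your occupation-time estimate is a statement about $\mathbb{P}$, not about individual paths, so ``transferring it $\omega$-wise'' is not available.

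The paper avoids all of this by following \cite[Theorem~8]{nualart2002regularization}: one approximates $b$ by \emph{monotone} sequences of Lipschitz drifts (from above and below), invokes the comparison theorem for additive-noise equations to get monotone sequences of strong solutions $X^n$, and hence almost-sure convergence on the original probability space without any selection. Proposition~\ref{prop:helper-existence-strong} then shows the a.s.\ limit solves \eqref{volterrasde}. The comparison/monotonicity device is precisely the missing ingredient in your argument; once you insert it in place of the measurable-selection step, the rest of your outline goes through and is close to the paper's proof.
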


We also emphasize that the result is even new if we do not have any discontinuities. In contrast to \cite{boufoussi} we do not assume the drift to be monotone. Due to an argument presented in \cite{boufoussi}, under an additional assumption on the drift, we obtain the following well-posedness result.

\begin{corollary} \label{cor:unique_solution}
Under Assumption \ref{ass:A1} and the additional assumption that the drift $b(t,x)$ is nonincreasing in \( x \) for every $t$, the SDE \eqref{volterrasde} admits a unique strong solution.
\end{corollary}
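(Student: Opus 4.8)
The plan is to derive the corollary from Theorem~\ref{thm:strong_solution} by establishing \emph{pathwise uniqueness} and then noting that strong existence together with pathwise uniqueness forces the strong solution to be unique. The key structural feature of \eqref{volterrasde} is that the noise $\mathrm{d}B^K_t$ enters additively. Hence, if $X$ and $Y$ are two strong solutions on the same space, driven by the same $(W_t)_{t\in[0,T]}$ and started at the same $x_0$, the integral form of the equation gives
\[
    X_t - Y_t = \int_0^t \bigl( b(s,X_s) - b(s,Y_s) \bigr)\, \mathrm{d}s, \qquad t \in [0,T],
\]
because the Volterra terms $B^K_t$ and the initial data cancel. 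In particular $t \mapsto X_t - Y_t$ is absolutely continuous with $X_0 - Y_0 = 0$, and no stochastic calculus is needed beyond this identity.

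Next I would introduce $\varphi(t) := (X_t - Y_t)^2$, which is absolutely continuous with
\[
    \varphi'(t) = 2\,(X_t - Y_t)\bigl( b(t,X_t) - b(t,Y_t) \bigr) \qquad \text{for a.e.\ } t \in [0,T].
\]
At this point the additional monotonicity assumption is decisive: since $b(t,\cdot)$ is nonincreasing for every $t$, one has $(x-y)\bigl(b(t,x)-b(t,y)\bigr) \le 0$ for all $x,y \in \mathbb{R}$, whence $\varphi'(t) \le 0$ almost everywhere. As $\varphi \ge 0$ and $\varphi(0) = 0$, this yields $\varphi \equiv 0$, i.e.\ $X_t = Y_t$ for all $t$ almost surely. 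This is precisely pathwise uniqueness. It is worth stressing that the inequality $(x-y)(b(t,x)-b(t,y)) \le 0$ holds pointwise for \emph{every} pair $(x,y)$, irrespective of continuity, so the finitely many discontinuity points of $b(t,\cdot)$ (across which the nonincreasing profile merely jumps downward) present no difficulty.

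Finally, since any two strong solutions are driven by the same Brownian motion and share the initial datum, the pathwise uniqueness just shown forces them to coincide almost surely; combined with the strong existence supplied by Theorem~\ref{thm:strong_solution}, this gives the asserted unique strong solution. I expect no genuine obstacle in this argument: the only points that require care are the integrability of $s \mapsto b(s,X_s)$, which follows from the linear growth bound in Assumption~\ref{ass:A1} together with the continuity (hence boundedness on $[0,T]$) of the solution paths, and the almost-everywhere differentiation of $\varphi$. The substantive difficulty of the problem resides entirely in the existence statement of Theorem~\ref{thm:strong_solution}, exactly as in the monotone-drift argument of \cite{boufoussi}.
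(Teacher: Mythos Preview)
Your argument is correct and rests on the same idea as the paper's proof: since the noise is additive, the difference of two solutions is a pure drift integral, and the monotonicity of $b(t,\cdot)$ forces this difference to vanish. The executions differ slightly. The paper observes that $(X^1_t-X^2_t)$ is a continuous semimartingale and applies Tanaka's formula to obtain
\[
    |X_t^1 - X_t^2| = \int_0^t \operatorname{sign}(X_s^1 - X_s^2)\bigl(b(s,X_s^1)-b(s,X_s^2)\bigr)\,\mathrm{d}s \le 0,
\]
whereas you differentiate $\varphi(t)=(X_t-Y_t)^2$ directly. Your route is the more elementary of the two: since $X-Y$ is absolutely continuous, the local time in Tanaka's formula vanishes anyway, so invoking it is unnecessary machinery. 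On the other hand, the paper's formulation via $|X^1-X^2|$ makes the link to the comparison-type argument of \cite{boufoussi} explicit. Either way, the substance is identical and the monotonicity inequality $(x-y)(b(t,x)-b(t,y))\le 0$ is the only ingredient beyond Theorem~\ref{thm:strong_solution}.
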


We now turn to our next example of highly irregular drift functions. This class of functions is motivated by the example where the drift is the Dirichlet function, that is \( b(t,x) = b(x) = \mathbbm{1}_{\mathbb{Q}}(x) \). As is well known, the corresponding ordinary differential equation
\[
    {x}_t = \int_0^t b(x_s)  \mathrm{d}s + x_0,
\]
admits the unique solution \( x_t = x_0 \) if the initial condition \( x_0 \in \mathbb{R} \setminus \mathbb{Q} \), but is ill-posed if \( x_0 \in \mathbb{Q} \), since in this case no solution exists. However, as we show below, for a generalization of such functions, the behavior of the perturbed equation is markedly different.

To describe this generalization, we introduce the following assumption:

\begin{assumption} \label{ass:A2}
There exist \( m \in \mathbb{N} \), countable sets \( M_1, \dots, M_m \subset \mathbb{R} \), and functions \( f_1, \dots, f_m \) with \( f_i \in L^{q_i}(\mathbb{R}) \), for \( q_1, \dots, q_m \in (1, \infty] \), such that
\[
    b(t,x) = b(x) = \sum_{i=1}^m f_i(x) \mathbbm{1}_{M_i}(x), \quad x \in \mathbb{R}.
\]
\end{assumption}

Now we state the next main theorem:

\begin{theorem} \label{thm2:strong_solution}
Consider the SDE \eqref{volterrasde}. Under Assumption \ref{ass:A2}, the SDE admits a strong solution. Furthermore, if the functions \( f_i \) in Assumption \ref{ass:A2} are bounded, the solution is unique.
\end{theorem}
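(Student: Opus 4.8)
The plan is to exploit the single structural feature of Assumption \ref{ass:A2} that really matters: each \( M_i \) is \emph{countable}, so \( M := \bigcup_{i=1}^m M_i \) is a Lebesgue-null set and \( b \) vanishes identically off \( M \) (indeed \( b = b\,\mathbbm{1}_M \)). For existence I would simply propose the driftless candidate \( \widehat X_t := x_0 + B^K_t \) and show that the drift term contributes nothing. Taking a jointly measurable version of \( \widehat X \) and using Tonelli, it suffices to check that for every fixed \( a \in \mathbb{R} \),
\[
    \mathbb{E}\int_0^T \mathbbm{1}_{\{\widehat X_s = a\}}\,\mathrm{d}s = \int_0^T \mathbb{P}\big(B^K_s = a - x_0\big)\,\mathrm{d}s = 0 .
\]
This holds because \( B^K_s \) is a centered Gaussian whose variance \( \int_0^s K(s,u)^2\,\mathrm{d}u \geq \kappa_\varepsilon^2 \geq c^2\varepsilon^{2H} > 0 \) by \eqref{eq:kernel_assumption}, hence atomless. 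Summing over the countable set \( M \) gives \( \int_0^T \mathbbm{1}_M(\widehat X_s)\,\mathrm{d}s = 0 \) almost surely, so \( \int_0^t b(\widehat X_s)\,\mathrm{d}s = 0 \) for all \( t \), and \( \widehat X \) is a strong solution. Note no growth condition is needed here, consistent with Assumption \ref{ass:A2}.

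For uniqueness, assume the \( f_i \) are bounded, so \( b \) is bounded. I would prove that \emph{every} solution \( X \) must in fact have vanishing drift, which forces \( X = \widehat X \) and yields uniqueness at once. Writing \( X_t = x_0 + A_t + B^K_t \) with \( A_t = \int_0^t b(X_s)\,\mathrm{d}s \) Lipschitz in \( t \), the goal reduces to showing \( \mathbb{P}(X_s = a) = 0 \) for every \( s \in (0,T] \) and every \( a \), after which the countable union over \( M \) together with Fubini gives \( \int_0^T \mathbbm{1}_M(X_s)\,\mathrm{d}s = 0 \) and hence \( \int_0^t b(X_s)\,\mathrm{d}s = 0 \).

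The key estimate, which I expect to be the main obstacle, is to establish this non-atomicity for a solution whose drift is \emph{correlated} with the noise (so the trivial Gaussian argument from the existence step does not apply directly). The plan is to fix \( \varepsilon \in (0,s) \) and decompose
\[
    X_s = \underbrace{x_0 + A_{s-\varepsilon} + \int_0^{s-\varepsilon} K(s,u)\,\mathrm{d}W_u}_{=:c_\varepsilon,\ \mathcal{A}_{s-\varepsilon}\text{-measurable}} \; + \; \underbrace{\int_{s-\varepsilon}^{s} b(X_u)\,\mathrm{d}u}_{=:R_\varepsilon,\ |R_\varepsilon|\,\leq\,\|b\|_\infty\,\varepsilon} \; + \; \underbrace{\int_{s-\varepsilon}^{s} K(s,u)\,\mathrm{d}W_u}_{=:G_\varepsilon},
\]
where \( G_\varepsilon \) is independent of \( \mathcal{A}_{s-\varepsilon} \) and Gaussian with variance \( \kappa_\varepsilon^2 \geq c^2\varepsilon^{2H} \). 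Since \( |R_\varepsilon| \leq \|b\|_\infty\varepsilon \), one has \( \{X_s = a\} \subseteq \{\, c_\varepsilon + G_\varepsilon \in [a - \|b\|_\infty\varepsilon,\, a + \|b\|_\infty\varepsilon]\,\} \); conditioning on \( \mathcal{A}_{s-\varepsilon} \) and bounding the Gaussian density uniformly yields
\[
    \mathbb{P}(X_s = a) \leq \frac{2\|b\|_\infty\,\varepsilon}{\sqrt{2\pi}\,\kappa_\varepsilon} \leq \frac{2\|b\|_\infty}{\sqrt{2\pi}\,c}\,\varepsilon^{\,1-H}.
\]
Letting \( \varepsilon \downarrow 0 \) and using \( H < 1 \) gives \( \mathbb{P}(X_s = a) = 0 \). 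The heart of the matter is precisely this competition: the drift increment oscillates only at scale \( \varepsilon \), while the conditional noise spread \( \kappa_\varepsilon \gtrsim \varepsilon^H \) dominates because \( H<1 \); boundedness of the \( f_i \) enters exactly through the bound \( |R_\varepsilon|\le\|b\|_\infty\varepsilon \), which is why it is needed for uniqueness but not for existence. Combining this with the countable structure of \( M \) and Fubini completes the argument, identifying every solution with \( \widehat X = x_0 + B^K \).
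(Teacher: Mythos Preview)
Your proposal is correct and follows essentially the same strategy as the paper: for existence you exhibit $x_0+B^K_t$ as a solution by showing the drift integral vanishes (since the Gaussian process avoids the countable set $M$), and for uniqueness you reproduce precisely the paper's conditionally-Gaussian-process decomposition (its Lemma~\ref{lemma:bound-probability-small-interval} and Remark~\ref{remark:probzero}) to show $\mathbb{P}(X_s=a)=0$ for any bounded-drift solution, forcing $X=x_0+B^K$.

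The one noteworthy difference is in the bookkeeping of the existence step. The paper bounds $\mathbb{E}\big[|f_i(B^K_s+x_0)|\,\mathbbm{1}_{M_i}(B^K_s+x_0)\big]$ via an iterated H\"older argument that uses the $L^{q_i}$ integrability of the $f_i$ before invoking $\mathbb{P}(B^K_s+x_0\in M_i)=0$. Your route is more direct: you first establish $\int_0^T\mathbbm{1}_M(\widehat X_s)\,\mathrm{d}s=0$ a.s.\ from atomlessness alone, and then observe that $b=b\,\mathbbm{1}_M$ forces $b(\widehat X_s)=0$ for Lebesgue-a.e.\ $s$, so the drift integral vanishes regardless of the size of $f_i$ on $M$. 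This is cleaner and shows that the $L^{q_i}$ hypothesis in Assumption~\ref{ass:A2} is not actually needed for existence; the paper's H\"older detour is unnecessary here. Both approaches, however, rely on the identical mechanism---countability of $M$ and nondegeneracy of the Gaussian law---so the difference is one of presentation rather than substance.
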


We emphasize that, as will be shown in the proof below, the existence of a solution is relatively straightforward. The more subtle aspect, and what makes this result nontrivial, is the uniqueness of the solution. The proof of uniqueness follows from the methods developed in this paper and will be detailed in the next sections.

We now deliver a final example of singular drift functions that can be regularized. This example is motivated by taking \( b(x) = \mathbbm{1}_{\mathbb{R} \setminus \mathbb{Q}}(x) \). Again, the corresponding ordinary differential equation is ill-posed: for example, if the initial condition \( x_0 \in \mathbb{Q} \), then there is more than one solution to this equation.

Now, we state the next main theorem. The proof of this result follows similar ideas and observations as in Theorem \ref{thm2:strong_solution}, making use of the methods and techniques developed here.

\begin{theorem} \label{thm3:strong_solution}
Consider the SDE \eqref{volterrasde}. Assume that \( b(x) = \mathbbm{1}_{M}(x) \), \( x \in \mathbb{R} \), for some set $M\subset \mathbb{R}$ such that $\mathbb{R} \setminus M$ is countable. Then the SDE admits a unique strong solution. More generally, if \( b(t,x) = \tilde{b}(t,x) \mathbbm{1}_{M}(x) \), where \( \tilde{b} \) is a measurable, uniformly bounded function such that the SDE with \( b \) replaced by \( \tilde{b} \) admits a unique strong solution, then the SDE with \( b \) also admits a unique strong solution.
\end{theorem}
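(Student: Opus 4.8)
The plan is to reduce both assertions to a single occupation-time estimate and then to transfer well-posedness from the auxiliary drift $\tilde b$ to $b$. First observe that the elementary case $b = \mathbbm{1}_M$ is exactly the instance $\tilde b \equiv 1$ of the general statement: the SDE \eqref{volterrasde} with constant drift $1$ has the explicit, pathwise-unique strong solution $X_t = x_0 + t + B^K_t$, so it suffices to prove the second, more general claim. Writing $N := \mathbb{R}\setminus M$ (countable), the technical core I would isolate is the following: for every strong solution $Y$ of \eqref{volterrasde} whose drift is bounded, say $|b(t,x)| \le L$ for all $(t,x)$, one has $\int_0^T \mathbbm{1}_N(Y_s)\,\mathrm{d}s = 0$ almost surely. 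This is the only nontrivial ingredient; once it is available, both existence and uniqueness are short.

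To prove the occupation-time estimate I would show $\mathbb{P}(Y_t = a) = 0$ for each fixed $a \in \mathbb{R}$ and $t \in (0,T]$, and then sum over the countable set $N$ and integrate in $t$ via Tonelli. Fix $\varepsilon \in (0,t)$ and split the driving noise as $B^K_t = \int_0^{t-\varepsilon} K(t,s)\,\mathrm{d}W_s + G_\varepsilon$, where $G_\varepsilon := \int_{t-\varepsilon}^t K(t,s)\,\mathrm{d}W_s$ is, by independence of Brownian increments, independent of $\mathcal{A}_{t-\varepsilon}$ and Gaussian with variance $\kappa_\varepsilon^2 \ge c^2 \varepsilon^{2H}$ by \eqref{eq:kernel_assumption}. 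Then $Y_t = V + R_\varepsilon + G_\varepsilon$, where $V := x_0 + \int_0^{t-\varepsilon} b(s,Y_s)\,\mathrm{d}s + \int_0^{t-\varepsilon} K(t,s)\,\mathrm{d}W_s$ is $\mathcal{A}_{t-\varepsilon}$-measurable and $R_\varepsilon := \int_{t-\varepsilon}^t b(s,Y_s)\,\mathrm{d}s$ satisfies $|R_\varepsilon| \le L\varepsilon$. The decisive step is to absorb the non-adapted remainder $R_\varepsilon$ into the target set: on $\{Y_t = a\}$ we have $V + G_\varepsilon \in [a - L\varepsilon,\, a + L\varepsilon]$, so conditioning on $\mathcal{A}_{t-\varepsilon}$ and bounding the Gaussian density of $G_\varepsilon$ by $(\sqrt{2\pi}\,c\,\varepsilon^H)^{-1}$ yields $\mathbb{P}(Y_t = a) \le 2L\varepsilon\,/\,(\sqrt{2\pi}\,c\,\varepsilon^H) = C\varepsilon^{1-H}$. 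Since $H < 1$, letting $\varepsilon \downarrow 0$ forces $\mathbb{P}(Y_t = a) = 0$.

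With the lemma in hand I would conclude as follows. Let $X$ denote the unique strong solution of \eqref{volterrasde} driven by $\tilde b$. For existence, note that along $X$ the two drifts differ only on $N$, namely $\tilde b(s,X_s) - b(s,X_s) = \tilde b(s,X_s)\mathbbm{1}_N(X_s)$, and since $X$ has bounded drift $\tilde b$ the lemma gives $\int_0^t |\tilde b(s,X_s)|\,\mathbbm{1}_N(X_s)\,\mathrm{d}s \le \|\tilde b\|_\infty \int_0^t \mathbbm{1}_N(X_s)\,\mathrm{d}s = 0$; hence $\int_0^t b(s,X_s)\,\mathrm{d}s = \int_0^t \tilde b(s,X_s)\,\mathrm{d}s$, so $X$ also solves \eqref{volterrasde} with drift $b$. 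For uniqueness, let $Y$ be any strong solution with drift $b$; since $|b| \le \|\tilde b\|_\infty$, the process $Y$ has bounded drift, so the same computation gives $\int_0^t b(s,Y_s)\,\mathrm{d}s = \int_0^t \tilde b(s,Y_s)\,\mathrm{d}s$, i.e.\ $Y$ solves \eqref{volterrasde} with drift $\tilde b$; by the assumed uniqueness for $\tilde b$ we obtain $Y = X$.

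The main obstacle is precisely the occupation-time estimate, and within it the fact that the recent drift increment $R_\varepsilon$ over $(t-\varepsilon,t)$ is neither $\mathcal{A}_{t-\varepsilon}$-measurable nor independent of $G_\varepsilon$, so one cannot simply invoke absolute continuity of the law of $Y_t$. The resolution is to use only the crude bound $|R_\varepsilon| \le L\varepsilon$, widening the target interval, and to let the Gaussian fluctuation of order $\varepsilon^H$ dominate the drift scale $\varepsilon$; this is exactly where the regime $H<1$ enters, and it explains why boundedness of the drift (rather than mere integrability) is the natural hypothesis. I would emphasise that this route relies only on the kernel lower bound \eqref{eq:kernel_assumption} and bypasses any Girsanov transformation or inversion of the Volterra kernel.
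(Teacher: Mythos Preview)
Your proposal is correct and follows essentially the same route as the paper: the occupation-time estimate $\mathbb{P}(Y_t=a)=0$ via the decomposition $Y_t=V+R_\varepsilon+G_\varepsilon$ is precisely the content of Lemma~\ref{lemma:bound-probability-small-interval} and Remark~\ref{remark:probzero} (your $V+G_\varepsilon$ is the paper's conditionally Gaussian process $Y_t(\varepsilon)$), and the transfer of well-posedness from $\tilde b$ to $b$ by showing $\int_0^t b(s,Y_s)\,\mathrm{d}s=\int_0^t \tilde b(s,Y_s)\,\mathrm{d}s$ a.s.\ is exactly the paper's argument. Your unification of the two assertions by treating $b=\mathbbm{1}_M$ as the instance $\tilde b\equiv 1$ is a mild streamlining, but not a different approach.
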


\subsection{Leading Examples}\label{sec1.1:leading}

In this section, we provide the promised examples that fall within the class of Volterra processes considered in this paper. We begin with the fBm.

\subsubsection{Long-range dependent fractional Brownian motion}

Let \( (B_t^H )_{ t \in [0,T] } \) be a fractional Brownian motion with Hurst parameter \( H \in (0,1) \), that is a centered Gaussian process with covariance function
\[
    \mathbb{E}(B^H_t B^H_s) = \frac{1}{2} \left( |t|^{2H} + |s|^{2H} - |t - s|^{2H} \right), \quad s,t \in [0,T].
\]
It is well known that there exists a standard Brownian motion \( (W_t)_{t \in [0,T]} \) such that the fBm \( (B_t^H )_{ t \in [0,T] } \) has an integral representation of the form
\[
    B^H_t = \int_0^t K_H(t, s) \, \mathrm{d} W_s,
\]
where the Volterra kernel is given by
\[
    K_H(t, s) = \Gamma(H + 1/2)^{-1} (t - s)^{H - 1/2} F(H - 1/2, 1/2 - H, H + 1/2, 1 - t/s)
\]
and $F$ denotes the Gauss hypergeometric function. We recall from \cite{decreusefond1999stochastic} the following bounds on the kernel \( K_H(t,s) \) for every \( t,s \in [0,T] \). Namely, for \( H \in (0, 1) \), we have
\[
    |K_H(t, s)| \leq C_1(H) s^{-|H - 1/2|} (t - s)^{\max\{-1/2 + H, 0\}} \mathbbm{1}_{[0,t]}(s),
\]
and for \( H \in (1/2, 1) \), we have
\begin{equation} \label{fbmkernelbound}
      |K_H(t, s)| \geq C_2(H) (t - s)^{H - 1/2} \mathbbm{1}_{[0,t]}(s),
\end{equation}
where \( C_1(H) \) and \( C_2(H) \) represent positive and finite constants defined in Lemma 3.1 and Theorem 3.2 of \cite{decreusefond1999stochastic}.

From \eqref{fbmkernelbound}, it follows that the assumption \eqref{eq:kernel_assumption} is fulfilled for \( H > 1/2 \). Thus, we obtain the following statement:

\begin{theorem} \label{thm:fbm_sde_solution}
Let \( (B_t^H)_{t \in [0,T]} \) be a fractional Brownian motion with Hurst index \( H \in (1/2, 1) \). Consider the SDE
\begin{align}\label{fbmsde}
\begin{split}
    \mathrm{d}X_t &= b(t,X_t) \, \mathrm{d} t + \mathrm{d} B_t^H, \quad t\in [0,T],\\
     X_0 &= x_0 \in \mathbb{R} ,
\end{split}
\end{align}
where $b \colon [0,T] \times \mathbb{R} \to \mathbb{R}$ is a measurable function. Then the following assertions hold:

(i) If \( b \) satisfies Assumption \ref{ass:A1}, there exists a strong solution to \eqref{fbmsde}. Moreover, if \( b(t,x) \) is non-increasing in \( x \) for every \( t \), the solution is unique.

(ii) If \( b \) satisfies Assumption \ref{ass:A2}, there exists a strong solution to \eqref{fbmsde}. Moreover, if the functions \( f_i \) in Assumption \ref{ass:A2} are bounded, the solution is unique.

(iii) Assume that \( b(x) = \mathbbm{1}_{M}(x) \), \( x \in \mathbb{R} \), for some set $M\subset \mathbb{R}$ such that $\mathbb{R} \setminus M$ is countable. Then the SDE \eqref{fbmsde} admits a unique strong solution. More generally, if \( b(t,x) = \tilde{b}(t,x) \mathbbm{1}_{M}(x) \), where \( \tilde{b} \) is a measurable, uniformly bounded function such that the SDE \eqref{fbmsde} with \( b \) replaced by \( \tilde{b} \) admits a unique strong solution, then the SDE with \( b \) also admits a unique strong solution.
\end{theorem}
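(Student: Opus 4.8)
The plan is to reduce everything to the general Volterra-process results already established, by checking that the fractional Brownian motion with $H \in (1/2,1)$ fits the abstract framework. Concretely, the canonical representation $B^H_t = \int_0^t K_H(t,s)\,\mathrm{d}W_s$ exhibits $(B^H_t)_{t\in[0,T]}$ as a Volterra process of the form driving \eqref{volterrasde}, with the standard Brownian motion $W$ whose augmented filtration serves as $(\mathcal{A}_t)_{t\in[0,T]}$. First I would note that $K_H$ is indeed an $L^2$ kernel: since $\int_0^t K_H^2(t,s)\,\mathrm{d}s = \mathbb{E}[(B^H_t)^2] = t^{2H} < \infty$ for each $t$, the double integral $\int_0^T\int_0^T K_H(t,s)^2\,\mathrm{d}s\,\mathrm{d}t$ is finite, so the SDE \eqref{fbmsde} is a genuine special case of \eqref{volterrasde}.

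The key step is to verify the kernel assumption \eqref{eq:kernel_assumption} for $H > 1/2$. For fixed $t \in (0,T]$ and $\varepsilon \in (0,t)$, I would apply the lower bound \eqref{fbmkernelbound} and compute
\[
    \kappa_\varepsilon^2 = \int_{t-\varepsilon}^t K_H^2(t,s)\,\mathrm{d}s \geq C_2(H)^2 \int_{t-\varepsilon}^t (t-s)^{2H-1}\,\mathrm{d}s = \frac{C_2(H)^2}{2H}\,\varepsilon^{2H}.
\]
Taking square roots yields $\kappa_\varepsilon \geq c\,\varepsilon^H$ with $c = C_2(H)/\sqrt{2H} \in (0,\infty)$, a constant independent of $t$ and $\varepsilon$. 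Hence \eqref{eq:kernel_assumption} holds with this value of $c$ and with the Hurst parameter $H$ itself playing the role of the exponent in the assumption. Note that the restriction to $H > 1/2$ enters only through the availability of \eqref{fbmkernelbound}, not through the elementary integration above.

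With \eqref{eq:kernel_assumption} established, each assertion follows by directly invoking the corresponding general result. Assertion (i) is exactly Theorem \ref{thm:strong_solution} together with Corollary \ref{cor:unique_solution}; assertion (ii) is Theorem \ref{thm2:strong_solution}; and assertion (iii) is Theorem \ref{thm3:strong_solution}.

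The main obstacle is organizational rather than analytic: all of the substance is contained in the general theorems, and the only genuine verification is the integration of the kernel lower bound above. The one subtle point I would take care to address is the compatibility of the filtrations, since the notion of strong solution in the general theorems is with respect to the augmented filtration generated by $W$. I would invoke the standard fact that the driving Brownian motion $W$ in the canonical representation generates the same filtration as $(B^H_t)_{t\in[0,T]}$, which ensures that the abstract strong-existence and uniqueness conclusions transfer verbatim to \eqref{fbmsde}.
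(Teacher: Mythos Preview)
Your proposal is correct and follows essentially the same route as the paper: verify the kernel lower bound \eqref{eq:kernel_assumption} for $K_H$ with $H>1/2$ via \eqref{fbmkernelbound}, and then invoke Theorem~\ref{thm:strong_solution}, Corollary~\ref{cor:unique_solution}, Theorem~\ref{thm2:strong_solution}, and Theorem~\ref{thm3:strong_solution}. The paper's own argument is in fact briefer than yours---it simply asserts that \eqref{eq:kernel_assumption} follows from \eqref{fbmkernelbound}---so your explicit integration and the remark on filtration compatibility are welcome but not strictly required additions.
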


It is worth mentioning that, as an application of Theorem \ref{thm:fbm_sde_solution}, we obtain well-posedness results for stochastic differential equations with multiplicative noise of the form
\begin{align} \label{eq:multiplicative_sde}
\begin{split}
    \mathrm{d}X_t &= b(t, X_t)\, \mathrm{d} t + \sigma(X_t)\, \mathrm{d} B_t^H, \quad t\in [0,T] ,\\
     X_0 &= x_0 \in \mathbb{R}.
\end{split}
\end{align}
This approach is particular to the case \(H > 1/2\).

To ensure well-posedness, we impose that \(\sigma\) is a bounded Hölder continuous function of order \(\delta > \frac{1}{H} - 1\), that is there exists a constant \(C > 0\) such that for all \(x, y \in \mathbb{R}\),
\[
    |\sigma(x) - \sigma(y)| \leq C |x - y|^{\delta}.
\]
Under this assumption, the stochastic integral
\[
    \int_0^t \sigma(X_s)\, \mathrm{d} B_s^H
\]
exists pathwise. Moreover, a change of variable formula for such integrals, as developed in \cite{zahle}, provides a useful tool for the analysis of these equations. Additionally assume that \(\sigma\) is bounded away from zero, i.e., there exists a constant \(c > 0\) such that
\[
    |\sigma(z)| \geq c \quad \text{for all } z \in \mathbb{R},
\]
and assume that $1/\sigma$ is uniformly continuous. Consider the transformation
\[
    F(x) = \int_0^x \frac{1}{\sigma(z)}\, \mathrm{d} z.
\]
Applying the change of variable formula \cite[Theorem 4.3.1]{zahle} to \(F(X_t)\), where \(X_t\) as in \eqref{eq:multiplicative_sde}, allows us to transform the equation into one with additive noise, thereby facilitating the analysis of well-posedness. More precisely,  \(( X_t)_{t\in [0,T]}\) is a solution to \eqref{eq:multiplicative_sde} if and only if the process \((Y_t = F(X_t))_{t\in [0,T]}\) is a solution to
\begin{align*}
    \mathrm{d}Y_t &= \frac{b(t, F^{-1}(Y_t))}{\sigma(F^{-1}(Y_t))}  \, \mathrm{d} t + \mathrm{d} B_t^H, \quad t \in [0,T], \\
    Y_0 &= F(x_0).
\end{align*}
As a consequence, if the drift function above is uniformly continuous, we obtain the following result, which extends the findings of \cite{catellier2,dareiotis} to further examples of singular drift functions.

\begin{corollary} \label{cor:multiplicative_sde}
Consider the SDE \eqref{eq:multiplicative_sde}. Then the first conclusion of Theorem \ref{thm:fbm_sde_solution} (i), as well as the conclusions of (ii) and (iii), hold. Moreover, if the function \( \sigma \) is increasing, the second conclusion of (i) also holds.
\end{corollary}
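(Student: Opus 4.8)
The plan is to transfer each assertion from the additive equation to the multiplicative equation \eqref{eq:multiplicative_sde} through the deterministic change of variables $F(x)=\int_0^x \sigma(z)^{-1}\,\mathrm{d}z$, exploiting the equivalence established above: $(X_t)$ solves \eqref{eq:multiplicative_sde} if and only if $(Y_t=F(X_t))$ solves the additive SDE \eqref{fbmsde} with transformed drift $\tilde b(t,y)=b(t,F^{-1}(y))/\sigma(F^{-1}(y))$. First I would record the elementary properties of $F$. Since $\sigma$ is continuous and bounded away from zero, $1/\sigma$ has constant sign, so $F$ is a strictly monotone $C^1$-bijection of $\mathbb{R}$; because both $\sigma$ and $1/\sigma$ are bounded, $F$ and $F^{-1}$ are globally Lipschitz. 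In particular $F$ is a homeomorphism, so the correspondence $X\leftrightarrow Y=F(X)$ is one-to-one and maps strong solutions to strong solutions. Consequently existence (respectively uniqueness) for the $Y$-equation is equivalent to existence (respectively uniqueness) for \eqref{eq:multiplicative_sde}, and it suffices to check that $\tilde b$ falls within the scope of Theorem \ref{thm:fbm_sde_solution}.

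Next I would verify case by case that $\tilde b$ inherits the relevant structural hypothesis, which yields the existence statements. Under Assumption \ref{ass:A1}, the bi-Lipschitz map $F^{-1}$ sends the finitely many discontinuities $a_1,\dots,a_m$ of $b(t,\cdot)$ to the finitely many discontinuities $F(a_1),\dots,F(a_m)$ of $\tilde b(t,\cdot)$; on each continuity piece, composition with the uniformly continuous $F^{-1}$ and multiplication by the bounded uniformly continuous factor $1/(\sigma\circ F^{-1})$ preserve uniform continuity, and linear growth of $\tilde b$ follows from $|F^{-1}(y)|\le C'(1+|y|)$ together with $|1/\sigma|\le 1/c$. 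Hence $\tilde b$ satisfies Assumption \ref{ass:A1} and Theorem \ref{thm:fbm_sde_solution} (i) applies. Under Assumption \ref{ass:A2} I would write $\tilde b(y)=\sum_{i=1}^m \tilde f_i(y)\mathbbm{1}_{\tilde M_i}(y)$ with $\tilde M_i=F(M_i)$, still countable since $F$ is injective, and $\tilde f_i=(f_i/\sigma)\circ F^{-1}$; the substitution $y=F(x)$, whose Jacobian $1/\sigma$ is bounded above and below, shows $\tilde f_i\in L^{q_i}(\mathbb{R})$, so Assumption \ref{ass:A2} holds and Theorem \ref{thm:fbm_sde_solution} (ii) applies. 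Finally, for $b=\mathbbm{1}_M$ with $\mathbb{R}\setminus M$ countable, one has $\tilde b(y)=\sigma(F^{-1}(y))^{-1}\,\mathbbm{1}_{F(M)}(y)$, where $\mathbb{R}\setminus F(M)=F(\mathbb{R}\setminus M)$ is countable and the base factor $\sigma^{-1}\circ F^{-1}$ is bounded, matching the general form in Theorem \ref{thm:fbm_sde_solution} (iii).

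For the uniqueness statements I would argue as follows. In the bounded A2 case and in the indicator case the transformed drift retains the same structure with the same boundedness and the same countable (hence Lebesgue-null) support, so the corresponding uniqueness parts of Theorem \ref{thm:fbm_sde_solution} (ii) and (iii) transfer through the bijection $F$: the countable set $F(M_i)$ is again null, and the bounded uniformly continuous base factor $\sigma^{-1}\circ F^{-1}$ is exactly the input required by the general part of (iii), where it reduces the question to well-posedness of the constant-drift equation. For the second conclusion of (i) the situation is more delicate: uniqueness for the $Y$-equation requires $\tilde b(t,\cdot)$ to be nonincreasing, and since $\sigma$ increasing forces $F$, and hence $F^{-1}$, to be increasing, this reduces to monotonicity of the quotient $x\mapsto b(t,x)/\sigma(x)$.

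The step I expect to be the main obstacle is precisely this last reduction. Even with $b(t,\cdot)$ nonincreasing and $\sigma$ increasing, the quotient $b/\sigma$ need not be nonincreasing where $b$ takes negative values, since the sign of its variation is governed by the competition between $b'\sigma$ and $b\sigma'$; controlling this requires using the full strength of the standing hypotheses on $\sigma$ (increasing, bounded, bounded away from zero, with $1/\sigma$ uniformly continuous) together with the assumption that the transformed drift is uniformly continuous, and it is here that the careful bookkeeping must be done. By contrast, the remaining verifications — bijectivity and bi-Lipschitz regularity of $F$, preservation of countability, and preservation of $L^{q_i}$-membership under the change of variables — are routine consequences of $\sigma$ and $1/\sigma$ being bounded.
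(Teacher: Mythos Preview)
Your approach coincides with the paper's: the corollary is presented there without a separate proof, as an immediate consequence of the change of variables $Y=F(X)$ described just before its statement. The case-by-case verification you outline for Assumptions~\ref{ass:A1} and~\ref{ass:A2} and for the indicator drift is exactly the intended mechanism, and your checks (bi-Lipschitz $F$, preservation of countability, $L^{q_i}$-membership under the substitution) are correct and more detailed than anything the paper spells out.

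The obstacle you flag at the end, however, is not a matter of bookkeeping: it is a genuine gap, and the standing hypotheses on $\sigma$ do not close it. Take for instance $b(t,x)\equiv -1$ (bounded, nonincreasing, satisfying Assumption~\ref{ass:A1}) and any increasing $\sigma$ meeting the assumptions, say $\sigma(x)=1+\tfrac{1}{\pi}\arctan x$. Then the transformed drift is $\tilde b(t,y)=-1/\sigma(F^{-1}(y))$, which is strictly \emph{increasing} in $y$, so the hypothesis of Corollary~\ref{cor:unique_solution} fails for the $Y$-equation and the Tanaka argument does not apply. None of the additional regularity of $\sigma$ (boundedness, uniform continuity of $1/\sigma$) changes the sign of this variation when $b<0$. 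Since the paper's only route to uniqueness under (i) is precisely the monotonicity argument of Corollary~\ref{cor:unique_solution} transported through $F$, this part of the corollary as stated does not follow from the available tools; your instinct that something is wrong here is correct, and you should not expect to repair it by a more careful computation.
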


\subsubsection{Riemann--Liouville process}

We now treat the Riemann--Liouville process \( (R_t^H)_{t \in [0,T]} \), which is defined via the stochastic integral representation  
\[
    R_t^H = \int_0^t (t-s)^{H-1/2} \, \mathrm{d} W_s, \quad t \in [0,T],
\]
were \( H \in (0,1) \) is a given parameter. This process arises in the Mandelbrot–Van Ness representation of fractional Brownian motion and has been studied in various contexts, see \cite{chen, lifshits} for example.  

The variance function of the Riemann--Liouville process is given by  
\[
    \kappa_t^2 = \int_0^t (t-s)^{2H-1} \, \mathrm{d} s = \frac{t^{2H}}{2H}.
\]
Moreover, assumption \eqref{eq:kernel_assumption} is clearly satisfied for every \( H \in (0,1) \). Thus, if we consider the SDE  
\begin{align} \label{eq:rl_sde}
\begin{split}
    \mathrm{d}X_t &= b(t,X_t) \, \mathrm{d} t + \mathrm{d} R_t^H, \quad t\in [0,T],\\
     X_0 &= x_0 \in \mathbb{R},
\end{split}
\end{align} 
we obtain the following result, which extends some of the findings in \cite{harang} to arbitrary \( H \in (0,1) \), without the restriction \( H < 1/2 \).  

\begin{theorem} \label{thm:rl_sde_solution}
Let \( (R_t^H)_{t \in [0,T]} \) be the Riemann--Liouville process with parameter $H\in (0,1)$. The conclusions of Theorem \ref{thm:fbm_sde_solution} also hold for the SDE \eqref{eq:rl_sde}.
\end{theorem}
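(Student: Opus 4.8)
The plan is to recognize the Riemann--Liouville process as a Volterra process to which the general framework of the paper applies verbatim, so that Theorem \ref{thm:rl_sde_solution} reduces to checking the standing kernel assumption \eqref{eq:kernel_assumption}. First I would observe that $R^H$ is exactly the Volterra process $B^K$ associated with the kernel $K(t,s) = (t-s)^{H-1/2}\mathbbm{1}_{[0,t]}(s)$, driven by the same Brownian motion $W$ that generates the filtration $(\mathcal{A}_t)_{t\in[0,T]}$. Consequently the SDE \eqref{eq:rl_sde} is a special case of \eqref{volterrasde}, and all of the existence and uniqueness machinery developed for \eqref{volterrasde} becomes available once the kernel is shown to be admissible.

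Next I would check the two requirements on $K$. For the $L^2$ kernel condition, a direct computation gives $\int_0^T\int_0^T K(t,s)^2\,\mathrm{d}t\,\mathrm{d}s = \int_0^T \frac{t^{2H}}{2H}\,\mathrm{d}t = \frac{T^{2H+1}}{2H(2H+1)} < \infty$ for every $H\in(0,1)$, since $2H-1 > -1$ whenever $H>0$. For assumption \eqref{eq:kernel_assumption}, the substitution $u = t-s$ yields $\kappa_\varepsilon^2 = \int_{t-\varepsilon}^t (t-s)^{2H-1}\,\mathrm{d}s = \int_0^\varepsilon u^{2H-1}\,\mathrm{d}u = \frac{\varepsilon^{2H}}{2H}$, independent of $t$, so that $\kappa_\varepsilon = (2H)^{-1/2}\varepsilon^H$. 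Hence \eqref{eq:kernel_assumption} holds with $c = (2H)^{-1/2}$, in fact with equality, and this is valid for the full range $H\in(0,1)$, which is precisely the feature that removes the restriction $H>1/2$ present in the fractional Brownian case.

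With the kernel assumption verified, I would conclude by applying the general results directly to \eqref{eq:rl_sde}. Since the three parts of Theorem \ref{thm:fbm_sde_solution} are themselves nothing more than the specializations of Theorem \ref{thm:strong_solution} together with Corollary \ref{cor:unique_solution}, of Theorem \ref{thm2:strong_solution}, and of Theorem \ref{thm3:strong_solution} to a Volterra process satisfying \eqref{eq:kernel_assumption}, the identical statements follow for $R^H$ upon replacing $B^K$ by $R^H$ in each invocation. I do not expect any genuine obstacle here: the only real content is the elementary variance computation above, and the result is a clean corollary of the general theory. The one point worth flagging is that, in contrast to the fractional Brownian kernel whose lower bound \eqref{fbmkernelbound} is only established for $H>1/2$, the Riemann--Liouville kernel satisfies \eqref{eq:kernel_assumption} for every $H\in(0,1)$, so the conclusions extend uniformly across the whole parameter range.
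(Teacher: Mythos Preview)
Your proposal is correct and matches the paper's own argument essentially verbatim: the paper does not give a separate proof for Theorem~\ref{thm:rl_sde_solution} but simply records the variance computation $\kappa_t^2 = t^{2H}/(2H)$ in the text preceding the statement, notes that assumption~\eqref{eq:kernel_assumption} is ``clearly satisfied for every $H\in(0,1)$'', and lets the general Theorems~\ref{thm:strong_solution}, \ref{thm2:strong_solution}, \ref{thm3:strong_solution} and Corollary~\ref{cor:unique_solution} do the rest. Your write-up is in fact slightly more thorough, since you also verify the $L^2$ kernel condition and make explicit the constant $c=(2H)^{-1/2}$.
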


Further examples where our results apply include mixtures of so-called completely correlated mixed fractional Brownian motions, as introduced in \cite{aurzada}. Specifically, consider the process  
\[
    B_t^K = \sum_{i=1}^{n} B_t^{H_i},
\]
where \( (B_t^{H_i})_{t \in [0,T]} \), \( i=1, \ldots, n \), $n \in \nat$, are completely correlated fractional Brownian motions with Hurst parameters \( H_1, \ldots, H_n \in (0,1) \). Then, the conclusions of our results also hold for this process if \( \max\limits_{1 \leq i \leq n} H_i \geq 1/2 \), thus extending reults in \cite{nualart2022regularization}. Moreover, the conlusions also hold for a (well-defined) linear combination of infinitely many completely correlated fractional Brownian motions.

The rest of this work is organized as follows. In Section \ref{sec:2strong}, we prove Theorem \ref{thm:strong_solution} and Corollary \ref{cor:unique_solution}. In Section \ref{sec:3dirich}, we prove Theorems \ref{thm2:strong_solution} and \ref{thm3:strong_solution}. Finally, in Section \ref{sec:4weak}, we focus on stochastic differential equations driven by fractional Brownian motion with Hurst index \( H > 1/2 \). Here, independently of the results from the previous sections, we prove that a weak solution of equations with bounded, piecewise Lipschitz drift arises as a limit in a certain Besov space of the solution to a mixed SDE driven by two fBms with different Hurst indices \( H_1 \) and \( H_2 \), where \( H_1 \leq 1/2 \) and \( H_2 = H \). Most of the techniques in this work extend naturally to multidimensional SDEs, with appropriate modifications to account for the vector-valued setting.

%

\section{Strong solutions of equations with piecewise continuous drift}\label{sec:2strong}

Let us briefly explain how we prove Theorem~\ref{thm:strong_solution}. As indicated earlier, we illustrate a probabilistic aspect on regularization by noise. Roughly speaking, due to the noise arising from the Volterra process, which is Gaussian, the solution to the equation spends little time around the discontinuity points of the drift. More particularly, a solution is expected to have Gaussian-like densities, though we do not aim to prove such a result here (a parallel project \cite{buthenhoff} focuses more concretely on such results). Based on these thoughts, in order to prove Theorem \ref{thm:strong_solution}, we establish an inequality that is nearly of Krylov-type. This, together with a comparison theorem, allows us to construct strong solutions, similar to \cite[Section 4]{nualart2002regularization}. A crucial difference to \cite{nualart2002regularization} is that we do not rely on Girsanov’s theorem. Instead, we construct an auxiliary process that is conditionally Gaussian and approximates the solution to the SDE \eqref{volterrasde}.

We proceed with the following definition.

\begin{definition}[Conditionally Gaussian process]
    Let $(X_t)_{t \in [0,T]}$ be a process satisfying \eqref{volterrasde}, $t\in (0,T]$ and $\varepsilon \in (0,t)$. We define the conditionally Gaussian process (CGP) as
    \begin{align}
        Y_t(\varepsilon) \coloneqq X_{t-\varepsilon} +  ({B_t^{K} - B_{t-\varepsilon}^{K}}) .
        \label{eq:definition-Y}
    \end{align}
\end{definition}
\begin{remark}\label{remark:expression-CGP}
    Note that the CGP admits the following expression
    \begin{align*}
        Y_t(\varepsilon) = X_t - \int_{t-\varepsilon}^t b(s,X_s) \mathrm{d}s \,,
    \end{align*}
    i.e. it approximates $X_t$ for $\varepsilon \to 0$.
\end{remark}
\begin{lemma}[cf.\ \citewithin{Lemma 4.4}{fan2021moment}, \citewithin{Lemma 4.5}{sonmez2020mixed}]\label{lemma:X-Y-expectation}
    If $b$ is uniformly bounded, i.e.
    $$ \norm{b}_\infty = \sup_{t\in [0,T]} \sup_{x \in \mathbb{R}} |b(t,x)| < \infty,$$
    then we have 
    \begin{align*}
        \mathbb{E}\kle{|X_t - Y_t(\varepsilon)|} \le \norm{b}_\infty \varepsilon \,
    \end{align*}
    for every $t\in (0,T]$.
\end{lemma}
\begin{proof}
    This is a direct consequence of Remark \ref{remark:expression-CGP}.
\end{proof}
The following theorem justifies the notion \enquote{conditionally Gaussian process}.
\begin{lemma}[cf.\ \citewithin{Lemma 4.2}{fan2021moment}, \citewithin{Lemma 4.4}{sonmez2020mixed}]\label{lemma:Y-conditionally-gaussian}
    Fix $t\in (0,T]$, $\varepsilon \in (0,t)$ and define 
    \begin{equation}
        \xi_t = X_{t-\varepsilon} + \int_0^{t-\varepsilon}   \klr{K(t,s) - K(t-\varepsilon,s)}\mathrm{d}W_s \,.\label{eq:def-xi}
  \end{equation}
    Then, for all $u \in \mathbb{R}$ we have
    \begin{align*}
        \mathbb{E}[\exp\klr{iu Y_t(\varepsilon)} | \mathcal{A}_{t-\varepsilon}] = \exp \klr{iu\xi - \frac{u^2}{2} \int_{t-\varepsilon}^t K(t,s)^2 \mathrm{d}s\,},
    \end{align*}
    i.e.\ given $\mathcal{A}_{t-\varepsilon}$ the CGP $Y_t(\varepsilon)$ is conditionally Gaussian with mean $\xi_t$ and variance 
    \begin{align}
        \kappa_\varepsilon^2 = \int_{t-\varepsilon}^t K(t,s)^2 \mathrm{d}s \,.
        \label{eq:variance-conditionally-gaussian-y}
    \end{align}
\end{lemma}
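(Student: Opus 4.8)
The plan is to decompose the CGP $Y_t(\varepsilon)$ into a part that is measurable with respect to $\mathcal{A}_{t-\varepsilon}$ and an independent centered Gaussian part, after which the conditional characteristic function factorizes immediately. First I would split the Wiener integral defining the increment $B_t^K - B_{t-\varepsilon}^K$ at the time $t-\varepsilon$. Writing $B_t^K = \int_0^{t-\varepsilon} K(t,s)\,\mathrm{d}W_s + \int_{t-\varepsilon}^t K(t,s)\,\mathrm{d}W_s$ and subtracting $B_{t-\varepsilon}^K = \int_0^{t-\varepsilon} K(t-\varepsilon,s)\,\mathrm{d}W_s$, one obtains
\[
    B_t^K - B_{t-\varepsilon}^K = \int_0^{t-\varepsilon}\klr{K(t,s) - K(t-\varepsilon,s)}\,\mathrm{d}W_s + \int_{t-\varepsilon}^t K(t,s)\,\mathrm{d}W_s .
\]
Adding $X_{t-\varepsilon}$ and comparing with \eqref{eq:def-xi}, this yields the clean identity $Y_t(\varepsilon) = \xi_t + \int_{t-\varepsilon}^t K(t,s)\,\mathrm{d}W_s$, which is the structural backbone of the argument.

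Next I would verify the two measurability and independence properties that make this decomposition useful. Since $(X_t)$ is a strong solution it is adapted, so $X_{t-\varepsilon}$ is $\mathcal{A}_{t-\varepsilon}$-measurable; likewise the It\^o integral $\int_0^{t-\varepsilon}\klr{K(t,s)-K(t-\varepsilon,s)}\,\mathrm{d}W_s$ is $\mathcal{A}_{t-\varepsilon}$-measurable (for the fixed $t$ the kernel slices $K(t,\cdot)$ and $K(t-\varepsilon,\cdot)$ lie in $L^2([0,T])$, so both integrals are well defined). Hence $\xi_t$ is $\mathcal{A}_{t-\varepsilon}$-measurable. On the other hand, for fixed $t$ the map $s \mapsto K(t,s)$ is deterministic, so $\int_{t-\varepsilon}^t K(t,s)\,\mathrm{d}W_s$ is a Wiener integral of a deterministic $L^2$ function supported on $[t-\varepsilon,t]$; it is therefore a centered Gaussian random variable measurable with respect to $\sigma\klr{W_r - W_{t-\varepsilon} : r\in[t-\varepsilon,t]}$, and thus independent of $\mathcal{A}_{t-\varepsilon}$, with variance $\int_{t-\varepsilon}^t K(t,s)^2\,\mathrm{d}s = \kappa_\varepsilon^2$ by the It\^o isometry.

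With these facts in hand the conclusion is a short computation. Conditioning on $\mathcal{A}_{t-\varepsilon}$, the factor $\exp(iu\xi_t)$ can be pulled out of the conditional expectation, and by independence the conditional expectation of $\exp\klr{iu\int_{t-\varepsilon}^t K(t,s)\,\mathrm{d}W_s}$ coincides with its unconditional expectation, namely the characteristic function of a centered Gaussian with variance $\kappa_\varepsilon^2$ evaluated at $u$, which equals $\exp\klr{-\tfrac{u^2}{2}\kappa_\varepsilon^2}$. Multiplying the two factors gives exactly the claimed formula, and the identification of the conditional mean $\xi_t$ and conditional variance $\kappa_\varepsilon^2$ follows.

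I do not anticipate a genuine obstacle here: the statement is essentially a repackaging of the independence of Brownian increments. The only point requiring mild care is the rigorous justification of the independence claim — that the increment-type Wiener integral $\int_{t-\varepsilon}^t K(t,s)\,\mathrm{d}W_s$ is independent of $\mathcal{A}_{t-\varepsilon}$ — which one settles by approximating $K(t,\cdot)$ on $[t-\varepsilon,t]$ by step functions, for which the integral is manifestly a linear combination of increments $W_{s_{j+1}} - W_{s_j}$ with $s_j \ge t-\varepsilon$, and then passing to the $L^2$ limit. One should also note that the identity is stated for a fixed $t$ (and a.e.\ such $t$, where $K(t,\cdot)\in L^2$), which is all that is needed in the sequel.
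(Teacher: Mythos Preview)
Your proof is correct and follows essentially the same route as the paper: decompose $B_t^K - B_{t-\varepsilon}^K$ at time $t-\varepsilon$, write $Y_t(\varepsilon) = \xi_t + \int_{t-\varepsilon}^t K(t,s)\,\mathrm{d}W_s$ with $\xi_t$ being $\mathcal{A}_{t-\varepsilon}$-measurable and the second integral independent of $\mathcal{A}_{t-\varepsilon}$, then compute the conditional characteristic function. Your version is slightly more detailed on the measurability and independence justifications, but the argument is the same.
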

\begin{proof}
    Since $\int_{t-\varepsilon}^t K(t,s) \mathrm{d}W_s$ is independent of $\mathcal{A}_{t-\varepsilon}$ we have
    \begin{align*}
        &\mathbb{E}\kle{\exp\klr{iu \int_{t-\varepsilon}^t { K(t,s)} \mathrm{d}W_s} | \mathcal{A}_{t-\varepsilon}}  = \exp\klr{-\frac{u^2}{2} \int_{t-\varepsilon}^t { K(t,s)}^2 \mathrm{d}s} \,.
    \end{align*}
    Now, by definition of $(B_t^K)_{t\in [0,T]}$ we have
    \begin{align*}
        B_t^K - B_{t-\varepsilon}^K = \int_0^{t-\varepsilon} \klr{K(t,s) - K(t-\varepsilon,s)} \mathrm{d}W_s + \int_{t-\varepsilon}^t K(t,s) \mathrm{d}W_s \,.
    \end{align*}
    Therefore we can express $Y_t(\varepsilon)$ as
    \begin{align*}
        Y_t(\varepsilon) = \xi_t +\int_{t-\varepsilon}^t K(t,s) \mathrm{d}W_s \,,
    \end{align*}
    where $\xi_t$ is defined in equation \eqref{eq:def-xi} and measurable with respect to $\mathcal{A}_{t-\varepsilon}$. Therefore we end up with the desired result:
    \begin{align*}
        \mathbb{E}[\exp\klr{iuY_t(\varepsilon)} | \mathcal{A}_{t-\varepsilon}] &= \exp\klr{iu\xi} \mathbb{E}\kle{\exp\klr{iu \int_{t-\varepsilon}^t {  K(t,s)} \mathrm{d}W_s} | \mathcal{A}_{t-\varepsilon}} \\
        &= \exp\klr{iu\xi - \frac{u^2}{2} \int_{t-\varepsilon}^t { K(t,s)}^2 \mathrm{d}s} \,.
    \end{align*}
\end{proof}
%
%
    %
    %
    %
    %
    %

As a consequence of Remark \ref{remark:expression-CGP} and Lemma \ref{lemma:Y-conditionally-gaussian}, we have the following. Recall at this point that the parameter $H\in (0,1)$ is derived from \eqref{eq:kernel_assumption}.
\begin{lemma}\label{lemma:bound-probability-small-interval}
    Let $(\alpha_n)_{n \in \nat}$ denote any sequence in $(0,1)$, let $(X_t)_{t \in [0,T]}$ be a process satisfying \eqref{volterrasde} and $t\in (0,T]$. If $b$ is uniformly bounded  there exists a constant $C > 0$ independent of $n,t$ such that
    \begin{align*}
        \mathbb{P}(X_t \in (x, x + \alpha_n)) \le C t^{-H} \alpha_n^{1-H} 
    \end{align*}
    for all $x \in \mathbb{R}$.
\end{lemma}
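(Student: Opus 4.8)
The plan is to condition on the $\sigma$-algebra $\mathcal{A}_{t-\varepsilon}$ and exploit the conditional Gaussianity of the CGP $Y_t(\varepsilon)$ established in Lemma \ref{lemma:Y-conditionally-gaussian}. Since the probability we wish to bound concerns the genuine solution $X_t$, not $Y_t(\varepsilon)$, the first step is to pass from $X_t$ to its Gaussian surrogate. Because $b$ is uniformly bounded, Remark \ref{remark:expression-CGP} gives the deterministic pathwise bound $|X_t - Y_t(\varepsilon)| \le \|b\|_\infty \varepsilon$. Hence the event $\{X_t \in (x, x+\alpha_n)\}$ is contained in the enlarged event $\{Y_t(\varepsilon) \in (x - \|b\|_\infty \varepsilon,\, x + \alpha_n + \|b\|_\infty \varepsilon)\}$, so that
\begin{align*}
    \mathbb{P}(X_t \in (x, x+\alpha_n)) \le \mathbb{P}\klr{Y_t(\varepsilon) \in (x - \norm{b}_\infty \varepsilon,\, x + \alpha_n + \norm{b}_\infty \varepsilon)}.
\end{align*}

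Next I would estimate the right-hand side by conditioning. By Lemma \ref{lemma:Y-conditionally-gaussian}, conditionally on $\mathcal{A}_{t-\varepsilon}$ the variable $Y_t(\varepsilon)$ is Gaussian with ($\mathcal{A}_{t-\varepsilon}$-measurable) mean $\xi_t$ and variance $\kappa_\varepsilon^2 = \int_{t-\varepsilon}^t K(t,s)^2\,\mathrm{d}s$. The probability that a Gaussian random variable with variance $\kappa_\varepsilon^2$ lands in an interval of length $\ell$ is bounded by $\ell/(\sqrt{2\pi}\,\kappa_\varepsilon)$, uniformly in the mean, since the Gaussian density is bounded by $(\sqrt{2\pi}\,\kappa_\varepsilon)^{-1}$. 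Here the interval has length $\ell = \alpha_n + 2\norm{b}_\infty \varepsilon$. Applying this conditional bound and then taking expectation over $\mathcal{A}_{t-\varepsilon}$ (the bound being deterministic) yields
\begin{align*}
    \mathbb{P}(X_t \in (x, x+\alpha_n)) \le \frac{\alpha_n + 2\norm{b}_\infty \varepsilon}{\sqrt{2\pi}\,\kappa_\varepsilon}.
\end{align*}
Now I invoke the kernel assumption \eqref{eq:kernel_assumption}, namely $\kappa_\varepsilon \ge c\,\varepsilon^H$, to obtain the bound $(\alpha_n + 2\norm{b}_\infty \varepsilon)/(c\sqrt{2\pi}\,\varepsilon^H)$.

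The final step is to optimize over the free parameter $\varepsilon \in (0,t)$. The bound has the shape $(\alpha_n + C'\varepsilon)\varepsilon^{-H}$, and the natural choice is $\varepsilon \asymp \alpha_n$, which balances the two contributions and produces the desired order $\alpha_n^{1-H}$. Concretely, choosing $\varepsilon = \alpha_n$ (which is admissible provided $\alpha_n < t$; the complementary regime $\alpha_n \ge t$ is handled separately, either trivially since $t^{-H}\alpha_n^{1-H} \ge \alpha_n^{1-H}\cdot\alpha_n^{-H}\cdot \text{const}$ is then large, or by noting the probability is at most $1$) gives
\begin{align*}
    \mathbb{P}(X_t \in (x, x+\alpha_n)) \le \frac{\alpha_n + 2\norm{b}_\infty \alpha_n}{c\sqrt{2\pi}\,\alpha_n^{H}} = \frac{1 + 2\norm{b}_\infty}{c\sqrt{2\pi}}\,\alpha_n^{1-H},
\end{align*}
which is uniform in $x$. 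I expect the main subtlety to lie not in any single estimate—each is elementary—but in the bookkeeping of the target's $t^{-H}$ factor. The clean choice $\varepsilon = \alpha_n$ produces a bound without $t^{-H}$, which is in fact \emph{stronger} than stated whenever $t \le 1$; to recover precisely the claimed form (and ensure $\varepsilon \in (0,t)$ for all $t$), one should instead take $\varepsilon = \min\{\alpha_n, t\}$ or scale $\varepsilon$ relative to $t$, and then verify that the constant $C$ can be chosen independently of both $n$ and $t$. Checking that this optimization respects the constraint $\varepsilon < t$ uniformly, and that the resulting constant absorbs $\norm{b}_\infty$ correctly, is the one place where care is genuinely required.
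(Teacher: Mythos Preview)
Your argument is correct and follows the same approach as the paper: pass from $X_t$ to the CGP $Y_t(\varepsilon)$ via the deterministic bound from Remark~\ref{remark:expression-CGP}, use the conditional Gaussianity of Lemma~\ref{lemma:Y-conditionally-gaussian} to bound the density by $(\sqrt{2\pi}\,\kappa_\varepsilon)^{-1}$, invoke~\eqref{eq:kernel_assumption}, and optimize in $\varepsilon$. The only difference is the choice of $\varepsilon$: the paper takes $\varepsilon = t\alpha_n$ from the outset, which automatically satisfies $\varepsilon \in (0,t)$ (since $\alpha_n<1$) and directly produces the $t^{-H}$ factor without any case distinction, whereas your choice $\varepsilon = \alpha_n$ yields the (nominally stronger) $t$-free bound but forces you to treat the regime $\alpha_n \ge t$ separately. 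You already identify the fix---``scale $\varepsilon$ relative to $t$''---and that is precisely the paper's route.
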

\begin{proof}
    In this proof we let $c, C_1, C_2 \ldots$ denote unspecified positive constants. Since the drift function $b$ is bounded, we have due to Remark \ref{remark:expression-CGP} with $\varepsilon = t \alpha_n$
    \begin{align*}
        |X_t - Y_t(t\alpha_n)| \le ct\alpha_n \,.
    \end{align*}
    Therefore, $X_t \in (x, x+\alpha_n)$ implies for the CGP $Y_{t}(t\alpha_n) \in (x-ct\alpha_n, x + (1+ct)\alpha_n)$ and we have
    \begin{align*}
        \mathbb{P}(X_t \in (x, x + \alpha_n)) &\le \mathbb{P}(Y_t(t\alpha_n) \in (x-ct\alpha_n, x + (1+ct)\alpha_n)) \\
        &= \mathbb{E}\kle{\mathbbm{1}{\klg{Y_t(t\alpha_n) \in (x-ct\alpha_n, x + (1+ct)\alpha_n)}}} \\
        & = \mathbb{E}\kle{\mathbb{E}\kle{\mathbbm{1}{\klg{Y_t(t\alpha_n) \in (x-ct\alpha_n, x + (1+ct)\alpha_n)}}}\mid A_{t-t\alpha_n} }.
    \end{align*}
    According to Lemma \ref{lemma:Y-conditionally-gaussian} $Y_t(t\alpha_n)$ is conditionally Gaussian with mean $\xi_t$ and variance $\kappa_\varepsilon^2$ given in equations \eqref{eq:def-xi} and \eqref{eq:variance-conditionally-gaussian-y}, respectively. Denote by $f(z)$ the density of the Gaussian distribution $\mathcal{N}(0, \kappa_\varepsilon^2)$, then we have
    \begin{align*}
        &\mathbb{E}\kle{\mathbbm{1}{\klg{Y_t(t\alpha_n) \in (x-ct\alpha_n, x + (1+ct)\alpha_n)}}\mid A_{t-t\alpha_n} } \\
        &= \int_{x - ct\alpha_n}^{x + (1+ct)\alpha_n} f(z - \xi_t) \mathrm{d}z = \int_{x - \xi_t - ct\alpha_n}^{x - \xi_t + (1+ct)\alpha_n} f(z) \mathrm{d}z \,.
    \end{align*}
    Due to \eqref{eq:kernel_assumption} we can estimate the variance $\kappa_\varepsilon^2$ as
    \begin{align*}
        \kappa_\varepsilon^2 \ge C_1 t^{2H} \alpha_n^{2H} \,,
    \end{align*}
    which implies that $f(z) \le C_2 t^{-H} \alpha_n^{-H}$, because $f$ is a Gaussian density. Hence,
    \begin{align*}
        &\mathbb{E}\kle{\mathbb{E}\kle{\mathbbm{1}{\klg{Y_t(t\alpha_n) \in (x-ct\alpha_n, x + (1+ct)\alpha_n)}}}\mid A_{t-t\alpha_n} }\\
        & \le C_3 \klr{t + C_4} t^{-H} \alpha_n^{1-H} \le C_5 t^{-H} \alpha_n^{1-H}
    \end{align*}
    and we can conclude
    \begin{align*}
        \mathbb{P}(X_t \in (x, x+\alpha_n)) \le C t^{-H} \alpha_n^{1-H} 
    \end{align*}
    as needed.
\end{proof}
\begin{remark} \label{remark:probzero}
    Because $H \in (0,1)$, we have $\mathbb{P}(X_t \in (x, x+\alpha_n)) \to 0$ for $n \to \infty$ if $(\alpha_n)_{n \in \nat}$ represents a zero sequence. In particular, every solution to \eqref{volterrasde} with bounded drift must satisfy $\mathbb{P}(X_t =x ) = 0$ for every $x \in \mathbb{R}$ and $t\in (0,T]$.
\end{remark}

Now we establish the following bound which is nearly of Krylov type. Though we have similar assumptions on the exponents as in \cite[Proposition 6]{nualart2002regularization}, it is a (slightly) different setting.

\begin{proposition}\label{prop:krylov-bound}
    Suppose that \( b \) is uniformly bounded. Let $(X_t)_{t \in [0,T]}$ be a process satisfying \eqref{volterrasde}. In addition, let \(q>  1 + H \) and let \( g \colon [0, T] \times \mathbb{R} \to \mathbb{R} \) be a measurable, non-negative, bounded function, that is piecewise uniformly continuous in $x$ with finitely many discontinuity points. There exist constants \( C_1, C_2 \in (0, \infty)\) with $C_1$ depending only on \( T \), \( \|b\|_\infty \), \( \|g\|_\infty \) and \( q \), $C_2$ depending only on \( T \), \( \|b\|_\infty \) and \( q \) such that we have
    \[
    \mathbb{E} \left[ \int_0^T g(t, X_t) \, \mathrm{d}t \right] \leq  C_1n^{H-1} + C_2 \varepsilon_n +  C_2 n^{2}
    \left( \int_0^T \int_{\mathbb{R}} g(t, x)^q \, \mathrm{d}x \, \mathrm{d}t \right)^{1/q},
    \]
    where $\varepsilon_n$ is a zero sequence independent of $t$.
\end{proposition}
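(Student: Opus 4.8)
The plan is to combine the pointwise ``density'' bound of Lemma \ref{lemma:bound-probability-small-interval} with a discrete Hölder inequality on a spatial grid of scale $\alpha_n = 1/n$, treating the finitely many cells that meet the discontinuity set of $g$ separately from those on which $g(t,\cdot)$ is genuinely continuous. Fix $n$, set $\alpha_n = 1/n \in (0,1)$, and partition $\mathbb{R}$ into the half-open cells $I_k = [k\alpha_n,(k+1)\alpha_n)$, $k\in\mathbb{Z}$. Since $g\ge 0$, Tonelli gives
\[
\mathbb{E}\left[\int_0^T g(t,X_t)\,\mathrm{d}t\right] = \int_0^T \sum_{k\in\mathbb{Z}} \mathbb{E}\big[g(t,X_t)\mathbbm{1}\{X_t\in I_k\}\big]\,\mathrm{d}t,
\]
and by Remark \ref{remark:probzero} the cell endpoints carry no mass, so Lemma \ref{lemma:bound-probability-small-interval} applies in the form $\mathbb{P}(X_t\in I_k)\le C t^{-H}\alpha_n^{1-H}$ with $C$ depending on $\|b\|_\infty$.

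For fixed $t$ I would split the sum according to whether $I_k$ meets the (finitely many, say $\ell$) discontinuity points of $g(t,\cdot)$. On the at most $\ell$ ``bad'' cells I bound $g$ by $\|g\|_\infty$ and use the cell probability directly; summing and integrating in $t$ yields $\ell\,\|g\|_\infty\, C\,\alpha_n^{1-H}\int_0^T t^{-H}\,\mathrm{d}t$, which is finite because $H<1$ and equals a constant times $n^{H-1}$. This produces the $C_1 n^{H-1}$ term and explains why $C_1$ depends on $T$, $\|b\|_\infty$, $\|g\|_\infty$ (and the fixed number $\ell$). On a ``good'' cell $g(t,\cdot)$ is uniformly continuous with some modulus $\omega$ independent of $t$, whence, using $(a+b)^q\le 2^{q-1}(a^q+b^q)$,
\[
\big(\sup_{I_k} g(t,\cdot)\big)^q \le 2^{q-1}\big(\alpha_n^{-1}\textstyle\int_{I_k} g(t,x)^q\,\mathrm{d}x + \omega(\alpha_n)^q\big).
\]

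The heart of the argument is the good-cell estimate. Writing $\mathbb{P}_k = \mathbb{P}(X_t\in I_k)$ and applying the discrete Hölder inequality with exponents $q$ and $q'=q/(q-1)$,
\[
\sum_{k\text{ good}} \big(\sup_{I_k} g(t,\cdot)\big)\mathbb{P}_k \le \Big(\sum_k \big(\sup_{I_k} g(t,\cdot)\big)^q \mathbb{P}_k\Big)^{1/q}\Big(\sum_k \mathbb{P}_k\Big)^{1/q'},
\]
where the second factor is at most $1$ since $\sum_k\mathbb{P}_k = 1$. Into the first factor I insert the oscillation bound, then apply $\mathbb{P}_k\le C t^{-H}\alpha_n^{1-H}$ to the integral term (which telescopes, $\sum_k\int_{I_k}g^q = \int_\mathbb{R} g^q$) and $\sum_k\mathbb{P}_k\le 1$ to the $\omega(\alpha_n)^q$ term. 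This is the key device that avoids any spatial truncation: the dangerous factor ``number of cells'' never appears, because each cell is weighted by its probability. The outcome, for fixed $t$, is a bound of the form $c\,t^{-H/q}\alpha_n^{-H/q}\big(\int_\mathbb{R} g(t,x)^q\,\mathrm{d}x\big)^{1/q} + c\,\omega(\alpha_n)$, with $c$ depending only on $q$ and $\|b\|_\infty$.

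Finally I integrate in $t$ and apply Hölder in time with the same exponents. The $\omega(\alpha_n)$ part integrates to $T$ times a zero sequence independent of $t$, which I take to be $\varepsilon_n$. For the main part Hölder produces the factor $\big(\int_0^T t^{-Hq'/q}\,\mathrm{d}t\big)^{1/q'}$, and this is exactly where the hypothesis $q>1+H$ enters: it is equivalent to $Hq'/q<1$, which is what makes this integral finite near $t=0$. What remains is $c\,\alpha_n^{-H/q}\big(\int_0^T\int_\mathbb{R} g^q\big)^{1/q} = c\,n^{H/q}\big(\int_0^T\int_\mathbb{R} g^q\big)^{1/q}$, and since $H/q<1<2$ one may crudely bound $n^{H/q}\le n^2$ to reach the stated $C_2 n^2$ term with $C_2$ depending only on $T$, $\|b\|_\infty$ and $q$. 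The step I would be most careful about is precisely this good-cell sum: ensuring the uniform-continuity error does not accumulate over the infinitely many cells, which is handled by retaining the probabilistic weights and invoking $\sum_k\mathbb{P}_k\le 1$, together with verifying that $q>1+H$ yields the required time integrability.
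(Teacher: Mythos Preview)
Your argument is correct, and it takes a genuinely different route from the paper's proof. The paper does not discretize space; instead it re-introduces the conditionally Gaussian process $Y_t(t/n^2)$ from Lemma~\ref{lemma:Y-conditionally-gaussian}, uses $|X_t - Y_t(t/n^2)|\le C/n^2$ together with the uniform continuity of $g$ on each continuity piece to replace $g(t,X_t)$ by $g(t,Y_t)$ up to an error $\varepsilon_n$, and then applies a \emph{continuous} H\"older inequality directly against the Gaussian conditional density of $Y_t$. This produces the factor $\big(\int_0^T \kappa_n^{1-p}\,\mathrm{d}t\big)^{1/p}$ with $\kappa_n\ge c(t/n^2)^H$, and the requirement $p<1+1/H$ (equivalently $q>1+H$) is what makes $\int_0^T t^{H(1-p)}\,\mathrm{d}t$ finite; the choice $\varepsilon=t/n^2$ is what accounts for the eventual $n^2$.

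Your approach is more elementary in the sense that it treats Lemma~\ref{lemma:bound-probability-small-interval} as a black box and never goes back to the CGP or to any density: the discrete H\"older inequality over cells, weighted by the cell probabilities $\mathbb{P}_k$, plays the role that the Gaussian density plays in the paper. The device of keeping the weights $\mathbb{P}_k$ and invoking $\sum_k\mathbb{P}_k\le 1$ for the oscillation part is exactly what prevents the infinitely many good cells from causing trouble, as you note. The condition $q>1+H$ reappears in your argument in the same place (integrability of $t^{-Hq'/q}=t^{-H/(q-1)}$ near $0$), and in fact you obtain the sharper prefactor $n^{H/q}$ rather than the paper's $n^{2H/q}$ before the crude majorization by $n^2$. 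Conversely, the paper's approach is somewhat more flexible for generalizations in which one wants to exploit the Gaussian structure of $Y_t$ directly rather than only via one-cell probability bounds.
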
 
\begin{proof}
    Without loss of generality assume that $g$ has one discontinuity point in $x$ at zero. As before, in this proof we denote by $C$ an unspecified positive and finite constant. We have
    \begin{align*}
    & \mathbb{E} \left[ \int_0^T g(t, X_t) \, \mathrm{d}t \right]  =   \mathbb{E} \left[ \int_0^T g(t, X_t) \mathbbm{1}_{\{0 < |X_t| < 1/n\}} \, \mathrm{d}t \right] \\
     & \quad +  \mathbb{E} \left[ \int_0^T g(t, X_t) \mathbbm{1}_{\{X_t > 1/n\}} \, \mathrm{d}t \right] + \mathbb{E} \left[ \int_0^T g(t, X_t) \mathbbm{1}_{\{X_t < -1/n\}} \, \mathrm{d}t \right] .
     \end{align*}
    Since $g$ is bounded, we can estimate
    \begin{align*}
    \mathbb{E} \left[ \int_0^T g(t, X_t) \mathbbm{1}_{\{0 < |X_t| < 1/n\}} \, \mathrm{d}t \right] & \leq C \mathbb{E} \left[ \int_0^T  \mathbbm{1}_{\{-1/n < X_t < 1/n\}} \, \mathrm{d}t \right] \\
    & =  C \int_0^T  \mathbb{P} ({-1/n < X_t < 1/n} ) \, \mathrm{d}t \\
    & \leq  C \int_0^T  n^{H-1} t^{-H} \, \mathrm{d}t \leq Cn^{H-1},
     \end{align*}
    where we used Lemma \ref{lemma:bound-probability-small-interval} and the fact that $H<1$. Next we estimate 
    \begin{align*}
        \mathbb{E} \left[ \int_0^T g(t, X_t) \mathbbm{1}_{\{X_t > 1/n\}} \, \mathrm{d}t \right] .
    \end{align*}
    The other term is estimated the same way. Recall again the definition and properties of $Y_t (t/n^2)$ and that
    $ |X_t - Y_t (t/n^2)| \leq Cn^{-2}$
    for some constant $C$ independent of $t$. Therefore, $X_t>1/n$ implies $Y_t (t/n^2) > 1/n - C/n^2>0$ for some sufficiently large $n \in \nat$. Since $g$ is non-negative and uniformly continuous on $(0,\infty)$, this implies that for sufficiently large $n$ we have $g(X_t) \leq g(Y_t (t/n^2)) + \varepsilon_n$ where $\varepsilon_n$ is a zero sequence independent of $t$. We obtain, abbreviating $Y_t (t/n^2) = Y_t $,
    \begin{align*}
    \mathbb{E} \left[ \int_0^T g(t, X_t) \mathbbm{1}_{\{X_t > 1/n\}} \, \mathrm{d}t \right] &\leq   \mathbb{E} \left[ \int_0^T g(t, Y_t) \mathbbm{1}_{\{X_t > 1/n\}} \, \mathrm{d}t \right] + T \varepsilon_n \\
     & \leq \mathbb{E} \left[ \int_0^T g(t, Y_t) \, \mathrm{d}t \right] + T \varepsilon_n = \mathbb{E} \left[ \ \mathbb{E} \left[\int_0^T g(t, Y_t) \, \mathrm{d}t \mid \mathcal{A}_{t-t/n^2} \right]\right] + T \varepsilon_n .
     \end{align*}
    Since $Y_t$ is conditionally normal with mean $\xi_t$ and variance $\kappa^2_n = \int_{t-t/n^2}^t K_H(t,s)^2 \mathrm{d}s$ we have
    \begin{align*}
    & \mathbb{E} \left[ \ \mathbb{E} \left[\int_0^T g(t, Y_t) \, \mathrm{d}t \mid \mathcal{A}_{t-t/n^2} \right]\right]  = \mathbb{E} \left[ \int_0^T \int_{\mathbb{R}} \frac{1}{\sqrt{2\pi}\kappa_n}g(t,y) \exp\Big( -\frac{(y-\xi_t)^2}{2\kappa_n^2}\Big) \mathrm{d}y \mathrm{d}t \right].
     \end{align*}
    Let $p \in (1, 1+ \frac{1}{H})$. Applying Hölder's inequality with \( 1/p + 1/q = 1 \) and \( q >  1 + H \), yields
    \begin{align*}
    &  \mathbb{E} \left[ \int_0^T \int_{\mathbb{R}} \frac{1}{\sqrt{2\pi}\kappa_n}g(t,y) \exp\Big( -\frac{(y-\xi_t)^2}{2\kappa_n^2}\Big) \mathrm{d}y \mathrm{d}t \right] \\
    & \leq \Big( \int_0^T \int_{\mathbb{R}} g(t,y)^q \mathrm{d}y \mathrm{d}t\Big)^{\frac{1}{q}} \cdot C \mathbb{E} \left[ \klr{\int_0^T \int_{\mathbb{R}} \frac{1}{\kappa^p_n} \exp\Big( -\frac{(y-\xi_t)^2}{2\kappa_n^2/p}\Big) \mathrm{d}y \mathrm{d}t }^{\frac{1}{p}}\right] \\
    & = \Big( \int_0^T \int_{\mathbb{R}} g(t,y)^q \mathrm{d}y \mathrm{d}t\Big)^{\frac{1}{q}} \cdot C  \klr{\int_0^T \kappa_n^{1-p} \mathrm{d}t}^{\frac{1}{p}} ,
     \end{align*}
    where by \eqref{eq:kernel_assumption} and the assumption that $p \in (1, 1+ \frac{1}{H})$
    \begin{align*}
     \int_0^T \kappa_n^{1-p} \mathrm{d}t \leq C\int_0^T \Big( t^H/n^{2H} \Big)^{1-p} \mathrm{d}t \leq Cn^{2H(p-1)} \leq Cn^2.
     \end{align*}
    This completes the proof.
\end{proof}

Based on Proposition \ref{prop:krylov-bound} we obtain the following result.

\begin{proposition}\label{prop:helper-existence-strong}
   Assume that there is a sequence \( b_n(t, x) \), $n\in \nat$,  of measurable functions, uniformly bounded by \( C \), piecewise uniformly continuous in $x$ with finitely many discontinuity points, such that
    \[
    \lim_{n \to \infty} b_n(t, x) = b(t, x)
    \]
    for almost all \( (t, x) \in [0, T] \times \mathbb{R} \). Suppose also that the corresponding solutions \( X^n_t\) of the equations
    \[
    X^n_t = x + B^K_t + \int_0^t b_n(s, X^n_s) \, \mathrm{d}s, \quad 0 \leq t \leq T,
    \]
    converge almost surely to some process \( X_t \) for all \( t \in [0, T] \). Then the process \( X_t \) is a solution of
    \[
    X_t = x + B^K_t + \int_0^t b (s, X_s) \, \mathrm{d}s, \quad 0 \leq t \leq T.
    \]
\end{proposition}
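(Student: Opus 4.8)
The plan is to identify the almost-sure limit of the drift integrals. Since $X^n_t \to X_t$ almost surely for every $t$ and the driving path $B^K_t$ is held fixed, the defining equation for $X^n$ yields
\[
\int_0^t b_n(s,X^n_s)\,\mathrm{d}s = X^n_t - x - B^K_t \longrightarrow X_t - x - B^K_t
\]
almost surely. It therefore suffices to show that $\int_0^t b_n(s,X^n_s)\,\mathrm{d}s \to \int_0^t b(s,X_s)\,\mathrm{d}s$ in $L^1(\Omega)$, which is legitimate because all drifts are bounded by $C$, so the integrals are uniformly bounded by $CT$. Using the triangle inequality with the intermediate value $b(s,X^n_s)$, I would split
\[
\big| b_n(s,X^n_s) - b(s,X_s) \big| \le \big| b_n(s,X^n_s) - b(s,X^n_s) \big| + \big| b(s,X^n_s) - b(s,X_s) \big|,
\]
and estimate the time-space expectations of the two pieces $(\mathrm{I})$ and $(\mathrm{II})$ separately.

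Term $(\mathrm{II})$, where the drift is fixed and only the argument moves, I would handle by first showing that the limit process charges no discontinuity point. By Lemma~\ref{lemma:bound-probability-small-interval} the estimate $\mathbb{P}(X^n_s \in (a_j-\delta,a_j+\delta)) \le C s^{-H}(2\delta)^{1-H}$ holds \emph{uniformly in} $n$, since the constant depends on the drift only through $C \ge \norm{b_n}_\infty$. Letting $n\to\infty$ via the portmanteau theorem for the open interval and then $\delta\to 0$ gives $\mathbb{P}(X_s = a_j)=0$ for every $s\in(0,T]$ and every discontinuity point $a_j$, exactly as in Remark~\ref{remark:probzero}. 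Hence, for $\mathrm{d}s\otimes\mathrm{d}\mathbb{P}$-almost every $(s,\omega)$ the value $X_s(\omega)$ lies in the interior of a continuity interval of $b(s,\cdot)$; since $X^n_s(\omega)\to X_s(\omega)$ and $b(s,\cdot)$ is uniformly continuous there, we get $b(s,X^n_s)\to b(s,X_s)$ almost everywhere, and bounded convergence drives $\mathbb{E}\int_0^T(\mathrm{II})\,\mathrm{d}s \to 0$.

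The real work is term $(\mathrm{I})$, where both the drift and its argument vary. Here I would invoke the near-Krylov estimate of Proposition~\ref{prop:krylov-bound} applied to $X^n$ with $g = |b_n - b|$; the decisive point is that its constants $C_1,C_2$ are uniform in $n$, depending on the drift only through $C \ge \norm{b_n}_\infty$ and on $\norm{g}_\infty \le 2C$. To keep the $L^q$-factor finite I would first localize in space: using the uniform moment bound $\sup_n \mathbb{E}\big[\sup_{t\le T}|X^n_t|\big] < \infty$ (a consequence of $|b_n|\le C$ and the continuity of $B^K$), the contribution of $\{|X^n_s| > R\}$ is at most $2CT\sup_n\mathbb{P}(\sup_{t\le T}|X^n_t| > R)$, which is small for large $R$, uniformly in $n$. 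On the complement one may replace $g$ by $g\,\mathbbm{1}_{[-R,R]}$, still bounded, piecewise uniformly continuous with finitely many discontinuities, and supported on a finite-measure set; Proposition~\ref{prop:krylov-bound} then bounds $\mathbb{E}\int_0^T(\mathrm{I})\,\mathrm{d}s$ by $C_1 N^{H-1} + C_2\varepsilon_N + C_2 N^{2}\big(\int_0^T\!\int_{-R}^{R} |b_n-b|^q\,\mathrm{d}x\,\mathrm{d}t\big)^{1/q}$ for every discretization parameter $N$. Since $b_n\to b$ almost everywhere and $|b_n-b|\le 2C$, dominated convergence on the finite-measure set $[0,T]\times[-R,R]$ sends this $L^q$-norm to $0$ as $n\to\infty$; holding $N$ fixed while letting $n\to\infty$, and then $N\to\infty$, drives the bound to zero.

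I expect the main obstacle to be precisely this double limit in term $(\mathrm{I})$: one cannot pass to the limit pointwise, because $b$ may be discontinuous exactly on the (null, but a-priori unknown) set visited by the moving argument, and the factor $N^{2}$ in the near-Krylov bound forces the two limits to be taken in the correct order, with the spatial localization needed to render the $L^q$-norm finite. A subtlety to be addressed is that the auxiliary zero-sequence $\varepsilon_N$ from Proposition~\ref{prop:krylov-bound} depends on the modulus of continuity of $g=|b_n-b|$; this is why one needs $\varepsilon_N$ to be chosen uniformly in $n$, which holds in our applications (for instance when the $b_n$ are truncations of a fixed piecewise uniformly continuous drift, so that they share a common modulus). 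Once both terms vanish, we obtain $\int_0^t b_n(s,X^n_s)\,\mathrm{d}s \to \int_0^t b(s,X_s)\,\mathrm{d}s$ in $L^1(\Omega)$, hence almost surely along a subsequence; comparing with the almost-sure limit $X_t - x - B^K_t$ already established identifies $X_t = x + B^K_t + \int_0^t b(s,X_s)\,\mathrm{d}s$ for each $t$, and the continuity in $t$ of both sides upgrades this to a solution of \eqref{volterrasde} on all of $[0,T]$.
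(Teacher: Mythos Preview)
Your route differs from the paper's and, as written, has a gap at the level of generality claimed. The proposition does \emph{not} assume that the limit $b$ is piecewise uniformly continuous with finitely many discontinuities---only each $b_n$ is---yet both of your terms need exactly this. For term~(II) you speak of ``the discontinuity points $a_j$'' of $b$ and use continuity of $b(s,\cdot)$ at $X_s$ to conclude $b(s,X^n_s)\to b(s,X_s)$; for term~(I) you assert that $g=|b_n-b|\,\mathbbm{1}_{[-R,R]}$ is ``piecewise uniformly continuous with finitely many discontinuities'' in order to invoke Proposition~\ref{prop:krylov-bound}. Since $b$ is merely an a.e.\ pointwise limit of bounded piecewise continuous functions, it can be an arbitrary bounded measurable function, so neither step is justified in general.

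The paper circumvents this by splitting at $b_n(s,X_s)$ rather than $b(s,X^n_s)$ and, for the ``same function, moving argument'' piece $J_1(n)=\sup_k\mathbb{E}\!\int_0^T|b_k(s,X^n_s)-b_k(s,X_s)|\,\mathrm{d}s$, inserting a \emph{finite} family of smooth approximants $F_1,\dots,F_N$ to all the $b_k$, obtained by relative compactness in $L^2([0,T]\times[-R,R])$. The continuity step is then carried entirely by the smooth $F_j$, while Proposition~\ref{prop:krylov-bound} is applied only with $g=\varphi(\cdot/R)\,|b_k-F_i|$, which \emph{is} piecewise uniformly continuous because $b_k$ is. If one adds the hypothesis that $b$ itself satisfies Assumption~\ref{ass:A1} (which is the case in the application to Theorem~\ref{thm:strong_solution}), your argument goes through and is pleasantly direct---no compactness trick, no auxiliary $F_i$. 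You are right to flag the uniformity-in-$n$ of $\varepsilon_N$; the paper's proof has the analogous issue hidden in its $I_1(n,k)$, and in both cases it is resolved only because in the iterative construction the $b_k$ share a common modulus of continuity on each piece.
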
 

\begin{proof}
It suffices to show that
\begin{align*}
    \lim_{n \to \infty} \mathbb{E} \left[ \int_0^T \left| b_n(s, X^n_s) - b(s, X_s) \right| \, \mathrm{d}s \right] = 0 \,.
\end{align*}
We can write
\[
J(n) := \mathbb{E} \left[ \int_0^T \left| b_n(s, X^n_s) - b(s, X_s) \right| \, \mathrm{d}s \right] \leq J_1(n) + J_2(n),
\]
where
\begin{align}
    J_1(n) := \sup_k \mathbb{E} \left[ \int_0^T \left| b_k(s, X^n_s) - b_k(s, X_s) \right| \, \mathrm{d}s \right] \,,
    \label{eq:important-limit}
\end{align}
and
\begin{align*}
    J_2(n) := \mathbb{E} \left[ \int_0^T \left| b_n(s, X_s) - b(s, X_s) \right| \, \mathrm{d}s \right] \,.
\end{align*}
Let \( \varphi \colon \mathbb{R} \to \mathbb{R} \) be a bounded smooth function such that \( 0 \leq \varphi(z) \leq 1 \) for every \( z \), \( \varphi(z) = 0 \) for \( |z| \geq 1 \) and \( \varphi(0) = 1 \). For every \( \varepsilon > 0 \) we can choose \( R > 0 \) such that
\[
\mathbb{E} \left[ \int_0^T \left| 1 - \varphi \left( \frac{X_t}{R} \right) \right| \, \mathrm{d}t \right] < \varepsilon.
\]
Moreover, the sequence of functions \( b_k \) is relatively compact in \( L^2([0, T] \times [-R, R]) \). Hence, we can find finitely many bounded smooth functions \( F_1, \dots, F_N \) such that for every \( k \),
\[
\int_0^T \int_{-R}^R \left| b_k(t, x) - F_i(t, x) \right|^2 \, \mathrm{d}x \, \mathrm{d}t < \varepsilon
\]
for some \( F_i \). We write

\begin{align*}
&\mathbb{E} \left[ \int_0^T \left| b_k(t, X^n_t) - b_k(t, X_t) \right| \, \mathrm{d}t \right] 
\leq \mathbb{E} \left[ \int_0^T \left| b_k(t, X^n_t) - F_i(t, X_n(t)) \right| \, \mathrm{d}t \right] \\
&\quad + \sum_{j=1}^N \mathbb{E} \left[ \int_0^T \left| F_j(t, X_n(t)) - F_j(t, X_t) \right| \, \mathrm{d}t \right] \\
&\quad + \mathbb{E} \left[ \int_0^T \left| b_k(t, X_t) - F_i(t, X_t) \right| \, \mathrm{d}t \right] \\
&:= I_1(n, k) + I_2(n) + I_3(k).
\end{align*}

By Proposition \ref{prop:krylov-bound}, we have
\begin{align*}
I_1(n, k) &\leq \mathbb{E} \left[ \int_0^T \varphi \left( \frac{X^n_t}{R} \right) \left| b_k(t, X^n_t) - F_i(t, X^n_t) \right| \, \mathrm{d}t \right] \\
&\quad + \mathbb{E} \left[ \int_0^T \left[ 1 - \varphi \left( \frac{X^n_t}{R} \right) \right] \left| b_k(t, X^n_t) - F_i(t, X^n_t) \right| \, \mathrm{d}t \right] \\
&\leq K l^{H-1} +K \varepsilon_l + K l^{2} \left( \int_0^T \int_{-R}^R \left| b_k(t, x) - F_i(t, x) \right|^2 \, \mathrm{d}x \, \mathrm{d}t \right)^{1/2} \\
&\quad + C \mathbb{E} \left[ \int_0^T \left[ 1 - \varphi \left( \frac{X^n_t}{R} \right) \right] \, \mathrm{d}t \right] \\
& \leq K l^{H-1} +K \varepsilon_l +  Kl^{2} \varepsilon + C\varepsilon
\end{align*}
for some constant \( C \) depending only on \( \|b\|_\infty \) and \( \sup_i \|F_i\|_\infty \). Note that this holds for every $l \in \mathbb{N}$, and every $\varepsilon>0$. Hence, choosing first $l$ sufficiently large, then $\varepsilon$ sufficiently small, we obtain
\[
\lim_{n \to \infty} \sup_k I_1(n, k) \leq \delta,
\]
for arbitrary $\delta>0$. Similarly, we prove
\[
 \sup_n I_3(n) \leq \delta.
\]
It follows that
\[
\lim_{n \to \infty} \sup_k I(n, k) \leq \delta
\]
and this implies that \( \lim_{n \to \infty} J_1(n) = 0 \). For the term \( J_2(n) \), we can write
\begin{align*}
    J_2(n) &= \mathbb{E} \left[ \int_0^T \varphi \left( \frac{X_t}{R} \right) \left| b_n(t, X_t) - b(t, X_t) \right| \, \mathrm{d}t \right] + \mathbb{E} \left[ \int_0^T \left[ 1 - \varphi \left( \frac{X_t}{R} \right) \right] \left| b_n(t, X_t) - b(t, X_t) \right| \, \mathrm{d}t \right] \\
    &\leq K \left( \int_0^T \int_{-R}^R \left| b_n(t, x) - b(t, x) \right|^2 \, \mathrm{d}x \, \mathrm{d}t \right)^{1/2}
    + C \mathbb{E} \left[ \int_0^T \left[ 1 - \varphi \left( \frac{X_t}{R} \right) \right] \, \mathrm{d}t \right].
\end{align*}
Now we can use the same arguments as before.
\end{proof}

We are now in position to construct strong solutions to \eqref{volterrasde} and complete the proof of Theorem~\ref{thm:strong_solution}.

\begin{proof}[Proof of Theorem \ref{thm:strong_solution}]
    Given that we established Proposition \ref{prop:helper-existence-strong} we can from now on proceed as in \cite[Theorem 8]{nualart2002regularization}. That is, starting from SDEs with Lipschitz continuous drift we can follow the steps given in the proof of the latter in order to approximate and construct solutions to equations with piecewise continuous drift satisfying linear growth by applying Proposition \ref{prop:helper-existence-strong} iteratively.
\end{proof}

It remains to prove Corollary \ref{cor:unique_solution}. As mentioned earlier, this is due to an argument based on \cite[Theorem 3.4]{boufoussi}.

\begin{proof}[Proof of Corollary \ref{cor:unique_solution}]
    Assume that both $(X^1_t)_{t \in [0,T]}$ and $(X^2_t)_{t \in [0,T]}$ are solutions to \eqref{volterrasde}. Then $(X^1_t-X^2_t)_{t \in [0,T]}$ is a continuous semi-martingale. Applying Tanaka's formula, see \cite[Theorem 3.7.1]{karatzas}, we then get
    $$|X_t^1 - X_t^2| = \int_0^t \operatorname{sign} (X_s^1 - X_s^2) \klr{b(s,X_s^1) - b(s,X_s^2)} \mathrm{d} s\leq 0$$
    for every $t\in [0,T]$, where the last inequality follows from the assumption that $b(s,x)$ is nonincreasing in $x$. Thus, we can deduce that $X_t^1 = X_t^2$ for every $t\in [0,T]$, with probability one.
\end{proof}
\section{Strong and unique solutions of equations with Dirichlet-type drift}\label{sec:3dirich}

The goal of this section is to prove Theorem~\ref{thm2:strong_solution} and Theorem~\ref{thm3:strong_solution}. Our approach relies on the results established in Section \ref{sec:2strong}. We begin with the proof of Theorem~\ref{thm2:strong_solution}. As indicated, the construction of a solution is straightforward. In fact, under Assumption~\ref{ass:A2}, a solution to the SDE~\eqref{volterrasde} is given explicitly by $(B_t^K+x_0)_{t\in[0,T]}$ itself. The more subtle and interesting part is to prove that this is the unique solution. To achieve this, we use the results from the previous section to show that any solution to the SDE~\eqref{volterrasde} must necessarily be equal to $B_t^K+x_0$ for every $t\in[0,T]$. The proof of Theorem~\ref{thm3:strong_solution} follows similar ideas. 

We now proceed with the proof of Theorem~\ref{thm2:strong_solution}.

\begin{proof}[Proof of Theorem~\ref{thm2:strong_solution}]
As mentioned, we first show that $X_t = x_0 + B_t^K$, $t\in [0,T]$, solves equation~\eqref{volterrasde}. For this, it suffices to show that 
\[
\int_0^t b(s,B_s^K +x_0) \mathrm{d} s = 0 \quad \text{a.s.}
\]
We estimate
\begin{align*}
\mathbb{E} \left[ \left| \int_0^t b(s,B_s^K+x_0) \mathrm{d}s \right| \right]
&\leq \int_0^t \sum_{i=1}^m \mathbb{E} \left[ |f_i(B_s^K+x_0)| \mathbbm{1}_{M_i}(B_s^K+x_0) \right] \mathrm{d}s.
\end{align*}
Since $f_i \in L^{q_i}(\mathbb{R})$ with $q_i > 1$, $i=1,\dots,m$, and $H \in (0,1)$, for each $q_i$ we can choose $q_i(1), q_i(2) \in (1,\infty)$ with $q_i = q_i(1) \cdot q_i(2)$. Moreover, it holds that 
$$q_i(1) \cdot q_i(2) \cdot H \in (H, q_i).$$ 
Further let $q_i(3) \in (1,q_i(1))$ and denote by $p_i(1), p_i(2), p_i(3)$ the corresponding H\"older conjugates. Denote by $c$ an unspecified constant. Applying H\"older's inequality iteratively, and using the fact that $x_0 + B_s^K$ is normally distributed with mean $x_0$ and variance $\kappa^2(s) := \int_0^s K(s,u)^2 \mathrm{d} u$, we get
\begin{align*}
&\int_0^t \sum_{i=1}^m \mathbb{E} \left[ |f_i(B_s^K+x_0)| \mathbbm{1}_{M_i}(B_s^K+x_0) \right] \mathrm{d}s \\
& \leq \int_0^t \sum_{i=1}^m \mathbb{E} \left[ |f_i(B_s^K+x_0)|^{q_i(1)} \right]^{\frac{1}{q_i(1)}} \mathbb{E} \left[ \mathbbm{1}_{M_i} (x_0+B_s^K) \right]^{\frac{1}{p_i(1)}} \mathrm{d}s \\
& \leq \sum_{i=1}^m \left( \int_0^t \mathbb{E} \left[ |f_i(B_s^K+x_0)|^{q_i(1)} \right]^{\frac{q_i(3)}{q_i(1)}} \mathrm{d}s \right)^{\frac{1}{q_i(3)}} 
\times \left( \int_0^t \mathbb{E} \left[ \mathbbm{1}_{M_i} (x_0+B_s^K) \right]^{\frac{p_i(3)}{p_i(1)}} \mathrm{d}s \right)^{\frac{1}{p_i(3)}},
\end{align*}
with
\begin{align*}
&   \mathbb{E} \left[ |f_i(B_s^K+x_0)|^{q_i(1)} \right]  =\int_\mathbb{R} |f_i(y)|^{q_i(1)} \frac{1}{\sqrt{2\pi}} \kappa(s)^{-1} \exp \left( -\frac{(y-x_0)^2}{2\kappa(s)^2} \right) \mathrm{d}y \\
&\leq \left( \int_\mathbb{R} |f_i(y)|^{q_i(1) q_i(2)} \mathrm{d}y \right)^{\frac{1}{q_i(2)}} 
\times \left( c \int_\mathbb{R} \kappa(s)^{-p_i(2)} \exp \left(-\frac{(y-x_0)^2}{2\kappa(s)^2/p_i(2)} \right) \mathrm{d}y \right)^{\frac{1}{p_i(2)}} \\
& \leq c \kappa (s)^{\frac{1-p_i(2)}{p_i(2)}},
\end{align*}
so that by \eqref{eq:kernel_assumption}
\begin{align*}
& \left( \int_0^t \mathbb{E} \left[ |f(B_s^K+x_0)|^{q_i(1)} \right]^{\frac{q_i(3)}{q_i(1)}} \mathrm{d}s \right)^{\frac{1}{q_i(3)}}   \leq C  \klr{ \int_0^t \kappa (s)^{\frac{1-p_i(2)}{p_i(2)}} \mathrm{d}s}^{\frac{1}{q_i(3)}} < \infty
\end{align*}
for each $i$. From this we obtain
\begin{align*}
&\int_0^t \sum_{i=1}^m \mathbb{E} \left[ |f(B_s^K+x_0)| \mathbbm{1}_{M_i}(B_s^K+x_0) \right] \mathrm{d}s  \leq  C \sum_{i=1}^m\left( \int_0^t \mathbb{E} \left[ \mathbbm{1}_{M_i} (x_0+B_s^K) \right]^{\frac{p_i(3)}{p_i(1)}} \mathrm{d}s \right)^{\frac{1}{p_i(3)}} \\
& = C \sum_{i=1}^m\left( \int_0^t \mathbb{P}(x_0+B_s^K \in M_i)^{\frac{p_i(3)}{p_i(1)}} \mathrm{d}s \right)^{\frac{1}{p_i(3)}} = 0,
\end{align*}
since 
$x_0+B_s^K$ is Gaussian and $M_i$ is countable by assumption. This proves that
\[
\int_0^t b(s,B_s^K +x_0) \mathrm{d}s = 0 \quad \text{a.s.}
\]

Thus, $X_t = x_0 + B_t^K$, $t\in [0,T]$, is indeed a solution to equation~\eqref{volterrasde}. It remains to prove that it is the only solution in case the functions $f_i$ in Assumption \ref{ass:A2} are bounded. Suppose that $X_t$, $t\in [0,T]$, solves \eqref{volterrasde} under these conditions, i.e.
$$ X_t = x_0 +  \int_0^t b(s,X_s) \mathrm{d} s+ B_t^K, \quad t \in [0,T].$$
Since $b$ is bounded, from Lemma \ref{lemma:bound-probability-small-interval} and Remark \ref{remark:probzero} we derive that
$$\mathbb{P}(X_s \in M_i) = 0$$
for every $s\in (0,T]$ and $i=1,\ldots, m$. Using this we obtain
\begin{align*}
    \mathbb{E}\left[ \left| \int_0^t b(s,X_s) \mathrm{d}s \right|\right]& \leq \sum_{i=1}^m\mathbb{E}\left[  \int_0^t \left|f_i(X_s)  \right|\mathbbm{1}_{M_i} (X_s)\mathrm{d}s \right] \\
    & \leq c \sum_{i=1}^m  \int_0^t \mathbb{P}\left(X_s \in M_i \right)\mathrm{d}s = 0,
\end{align*}
which proves that 
$$ \int_0^t b(s,X_s) \mathrm{d}s = 0 \quad \text{a.s.}$$
Thus, we have $X_t = x_0 + B_t^K$, $t \in [0,T]$, as claimed.
\end{proof}

With Theorem \ref{thm2:strong_solution} established, we now turn to the proof of Theorem \ref{thm3:strong_solution}, following similar arguments.

\begin{proof}[Proof of Theorem~\ref{thm3:strong_solution}]
As in the proof of Theorem~\ref{thm2:strong_solution}, we explicitly construct the solution, namely it is
\[
    X_t = x_0 + B_t^K + t, \quad t \in [0,T].
\]
We then establish uniqueness by showing that any solution \( X_t \) must necessarily take this form. 

For existence, observe that, similarly to the proof of Theorem~\ref{thm2:strong_solution}, we deduce
\[
    \int_0^t \mathbbm{1}_M (x_0 + B_s^K + s) \mathrm{d}s = \int_0^t 1 \, \mathrm{d}s - \int_0^t \mathbbm{1}_{\mathbb{R} \setminus M} (x_0 + B_s^K + s) \mathrm{d}s = t,
\]
since \( x_0 + B_s^K + s \) is Gaussian with mean \( x_0 + s \) and variance \( \int_0^s K^2(s,u) \, \mathrm{d}u \), for every $s \in (0,t)$. It follows that this process indeed solves the SDE \eqref{volterrasde}.

Regarding uniqueness, assume that \( (X_t)_{t \in [0,T]} \) is a solution to \eqref{volterrasde}. Then, applying Lemma~\ref{lemma:bound-probability-small-interval} and Remark~\ref{remark:probzero}, we deduce
\[
    \int_0^t \mathbbm{1}_M (X_s) \mathrm{d}s = t \quad \text{a.s.}
\]
Consequently, every solution satisfies
\[
    X_t = x_0 + B_t^K + t, \quad t \in [0,T].
\]
This proves the first assertion of the theorem.

For the second assertion, we argue similarly to show that every solution to \eqref{volterrasde} must satisfy
\[
    \int_0^t b(s,X_s) \mathrm{d}s = \int_0^t \tilde{b} (s,X_s) \mathrm{d}s \quad \text{a.s.}
\]
This completes the proof.
\end{proof}

%
\section{Convergence of mixed equations towards a weak solution}\label{sec:4weak}

The purpose of this section is to prove that weak solutions to SDEs driven by a single fractional Brownian motion with \( H > 1/2 \) arise as a limit in certain Besov spaces of solutions to SDEs driven by a mixture of completely correlated fractional Brownian motions with two different Hurst parameters. Equations with mixed fBms can be regularized for a larger class of drift functions. More precisely, for given $\sigma \in (0, \infty)$ consider
\begin{align*}
      \mathrm{d}X_t^\sigma & = b(t, X_t^\sigma)\, \mathrm{d}t + \sigma\, \mathrm{d}B_t^{H_1} + \mathrm{d}B_t^{H_2}, \quad t \in [0,T], \\
      X^\sigma_0 &= x_0 \in \mathbb{R},
\end{align*}
with \( H_1 \leq 1/2 \) and \( H_2 = H > 1/2 \). Such an equation is well-posed under a linear growth condition on \( b \), for every \( \sigma > 0 \). The well-posedness result follows, for example, from \cite{buthenhoff} (see also \cite{nualart2022regularization}).

A key observation is that the regularization effect of the fBm with the smaller Hurst index dominates, regardless of the considered regime. The idea now is to set \( \sigma = 1/N \) for \( N \in \mathbb{N} \) and then let \( N \to \infty \), thus viewing \( \mathrm{d}B_t^{H_1} \) as a stabilizing term and ultimately showing that the equation perturbed solely by \( \mathrm{d}B_t^{H_2} \) also has a solution. Such a procedure, in the case of a single noise,
\begin{align*}
        \mathrm{d}X_t^\varepsilon &= b(t, X_t^\varepsilon)\, \mathrm{d}t + \varepsilon\, \mathrm{d}B_t^H, \quad t \in [0,T], \\
      X_0 &= x_0 \in \mathbb{R},
\end{align*}
has been considered in \cite{pilipenko} and, more recently, in \cite{gassiat}, for a special class of drift functions. This phenomenon has been termed \enquote{selection by noise}, since, as \( \varepsilon \to 0 \), the equation converges to a 
solution of the corresponding ODE. The behavior of the equation with two noise terms is crucially different, as will be demonstrated here through various disguises.

In our setting, one would intuitively expect the limit to converge, moreover, to converge to a solution of the equation with solely one noise (without the stabilizing term). Although this expectation is intuitively clear, achieving this convergence involves several technical challenges. From now on, we impose the assumptions that the measurable drift function~$b\colon[0,T]\times\mathbb{R}\to\mathbb{R}$ satisfies:

\begin{itemize}
    \item[(B1)] \label{cond:B1} $b$ is bounded in $x$ uniformly in $t$, i.e., there exists a $c \in (0,\infty)$ such that for all $(t,x) \in [0,T] \times \mathbb{R}$ we have $|b(t,x)| \le c$.
    \item[(B2)] \label{cond:B2} $b$ is Lipschitz continuous in $x$ (uniformly in $t$) with Lipschitz constant $L$ on a finite number of intervals 
    \begin{align*}
        I_0 = (-\infty,a_1), \, I_1 = (a_1,a_2), \ldots, I_{m-1} = (a_{m-1},a_m), \, I_m = (a_m,+\infty)\,,
    \end{align*}
    where $a_1 < a_2 < \hdots < a_m$ are discontinuity points. That means for all $j = 0,\ldots,m$, for all $t \in [0,T]$ and for all $x,y \in I_j$
    \begin{align*}
        |b(t,x) - b(t,y)| \le L|x-y| \,.
    \end{align*}
\end{itemize}

We will construct weak solutions to 
\begin{equation}
    X_t = x_0 + \int_0^t b(s,X_s) \mathrm{d}s + B_t^{H_2}\,, \quad t \in [0,T], \, x_0 \in \mathbb{R}.
    \label{eq:main_sde}
\end{equation}
Let $N \in \nat$ and introduce a stabilizing term which corresponds to a fractional Brownian motion with Hurst parameter $H_1 \in (0,1/2]$. Denote the corresponding sequence of equations with the stabilizing term by
\begin{align}
    X_t^N = x_0 + \int_0^t b(s,X_s^N) \mathrm{d}s + \frac{1}{N}B_t^{H_1} + B_t^{H_2}, \,\quad t \in [0,T], \, x_0 \in \mathbb{R}.
    \label{eq:sequence-sde}
\end{align}

Here \((B_t^{H_1})_{t \in [0,T]}\) and \((B_t^{H_2})_{t \in [0,T]}\) are two completely correlated fractional Brownian motions, meaning that
\[
\frac{1}{N} B_t^{H_1} + B_t^{H_2} = \int_0^t \left(\frac{1}{N} K_{H_1}(t,s) + K_{H_2}(t,s)\right) \, \mathrm{d}W_s, \quad t \in [0,T],
\]
where \(K_{H_1}(t,s)\) and \(K_{H_2}(t,s)\) are the Volterra kernels discussed in Section \ref{sec1.1:leading}. As mentioned, for all $N \in \nat$ we have unique strong solutions $(X_t^N)_{t\in [0,T]}$ for the SDE \eqref{eq:sequence-sde} under the condition~\condref{cond:B1}{B1}. The additional assumption \condref{cond:B2}{B2} guarantees that for $N \to \infty$ the sequence of solutions $X^N$ converges to a weak solution for the SDE \eqref{eq:main_sde} in a stochastic Besov space. 
In order to prove this we will use Kolmogorov's criterion for tightness, see \citewithin{Theorem 21.42}{klenke2013probability}\label{thm:kolmogorov-tightness} for example. We recall that the stochastic Besov space with parameter $\beta \in (0,H_1)$ is given by
    \begin{align*}
        W^\beta[0,T] \coloneqq \klg{Y = Y_t(\omega); (t,\omega) \in [0,T] \times \Omega, \norm{Y}_\beta < \infty}
    \end{align*}
    with norm
    \begin{align*}
        \norm{Y}_\beta \coloneqq \sup_{t \in [0,T]}\klr{\mathbb{E}[Y_t^2] + \mathbb{E}\kle{\klr{\int_0^t \frac{|Y_t - Y_s|}{(t-s)^{1+\beta}} \mathrm{d}s}^2}} \,.
    \end{align*}
Let us show that the solutions $X^N$ belong to this stochastic Besov space.
\begin{proposition}\label{prop:X-in-besov}
    Let us assume that the function $b$ in equation \eqref{eq:sequence-sde} satisfies \condref{cond:B1}{B1}. Then the solution of equation \eqref{eq:sequence-sde} belongs to the stochastic Besov space $W^\beta[0,T]$ with norm $\norm{\cdot}_\beta$ for all $N \in \nat$ and all $\beta \in (0,H_1)$.
\end{proposition}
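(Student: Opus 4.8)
The plan is to verify the two ingredients of the norm $\norm{\cdot}_\beta$ separately: the pointwise second moment $\mathbb{E}[(X_t^N)^2]$ and the Besov-type seminorm $\mathbb{E}\big[\big(\int_0^t |X_t^N - X_s^N|\,(t-s)^{-1-\beta}\,\mathrm{d}s\big)^2\big]$, and then to take the supremum over $t \in [0,T]$. Throughout, only the boundedness assumption \condref{cond:B1}{B1} will be used, and the bounds obtained will in fact be uniform in $N$, which is what Kolmogorov's tightness criterion will later require. The first ingredient is immediate: writing $X_t^N = x_0 + \int_0^t b(s,X_s^N)\,\mathrm{d}s + \tfrac1N B_t^{H_1} + B_t^{H_2}$ and using $|b| \le c$ together with the fact that fractional Brownian motion has finite second moments $\mathbb{E}[(B_t^{H_i})^2] = t^{2H_i}$, one obtains $\mathbb{E}[(X_t^N)^2] \le C\,(1 + |x_0|^2 + c^2 T^2 + T^{2H_1} + T^{2H_2})$, bounded uniformly in $t$ and $N$ (the factor $1/N^2 \le 1$ only helps).

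For the seminorm, the key is to split the increment into its drift and noise contributions,
\[
X_t^N - X_s^N = \underbrace{\int_s^t b(u,X_u^N)\, \mathrm{d}u}_{=:\,D_{s,t}} + \underbrace{\tfrac1N(B_t^{H_1} - B_s^{H_1}) + (B_t^{H_2} - B_s^{H_2})}_{=:\,G_{s,t}},
\]
and to estimate the two resulting weighted integrals. The drift part is deterministic: since $|D_{s,t}| \le c(t-s)$, we have $\int_0^t |D_{s,t}|\,(t-s)^{-1-\beta}\,\mathrm{d}s \le c \int_0^t (t-s)^{-\beta}\,\mathrm{d}s = c\, t^{1-\beta}/(1-\beta)$, which is finite and uniformly bounded because $\beta < H_1 \le 1/2 < 1$. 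For the noise part, I would apply Minkowski's integral inequality to pull the $L^2(\Omega)$-norm inside the $\mathrm{d}s$-integral, thereby reducing matters to the purely deterministic estimate $\norm{G_{s,t}}_{L^2(\Omega)} \le \tfrac1N (t-s)^{H_1} + (t-s)^{H_2}$, which follows from the triangle inequality in $L^2(\Omega)$ and the increment identity $\mathbb{E}[(B_t^{H_i} - B_s^{H_i})^2] = (t-s)^{2H_i}$ for fBm.

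One then arrives at $\big\| \int_0^t |G_{s,t}|\,(t-s)^{-1-\beta}\,\mathrm{d}s \big\|_{L^2(\Omega)} \le \tfrac1N \int_0^t (t-s)^{H_1 - 1 - \beta}\,\mathrm{d}s + \int_0^t (t-s)^{H_2 - 1 - \beta}\,\mathrm{d}s$, and both integrals converge precisely because $H_1 - 1 - \beta > -1$, i.e.\ $\beta < H_1$. This is exactly where the hypothesis $\beta \in (0,H_1)$ enters, and it automatically forces $\beta < H_2$ since $H_1 \le 1/2 < H_2$; each integral is then bounded by $T^{H_i - \beta}/(H_i - \beta)$, uniformly in $t$ and, via $1/N \le 1$, in $N$. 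Combining the two contributions through $(a+b)^2 \le 2a^2 + 2b^2$, taking expectations, and passing to the supremum over $t$ yields $\norm{X^N}_\beta < \infty$. The only genuinely delicate point is the exchange of the $\mathrm{d}s$-integral with the $L^2(\Omega)$-norm for the noise term; Minkowski's integral inequality disposes of it cleanly, and this is the step I expect to be the crux of the argument.
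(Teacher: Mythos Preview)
Your proposal is correct and follows essentially the same route as the paper: bound $\mathbb{E}[(X_t^N)^2]$ via the decomposition and boundedness of $b$, then split the increment into drift and noise parts, handle the drift part via $|b|\le c$, and handle the noise part by pulling the $L^2(\Omega)$-norm inside the $\mathrm{d}s$-integral and using $\|B_t^{H_i}-B_s^{H_i}\|_{L^2} = (t-s)^{H_i}$ together with $\beta<H_1\le H_2$. The only cosmetic difference is that where you explicitly invoke Minkowski's integral inequality, the paper simply writes ``use the self-similarity of the fBm'' to arrive at the same bound $\mathbb{E}[|B_1^{H_i}|^2]\big(\int_0^t (t-s)^{H_i-1-\beta}\,\mathrm{d}s\big)^2$; your naming of the tool is in fact clearer.
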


\begin{proof}
    The following proof is based on \cite[Theorem 3.2.6]{mishura2008stochastic}. To prove the statement we want to estimate
    \begin{align*}
        A_1^N(t) + A_2^N(t) \coloneqq \mathbb{E}[({X_t^N})^2] + \mathbb{E}\kle{\klr{\int_0^t \frac{|X_t^N - X_s^N|}{(t-s)^{1+\beta}} \mathrm{d}s}^2} \,.
    \end{align*}
    The first term satisfies
    \begin{align*}
        A_1^N(t) &\le 4x_0^2 + 4 \mathbb{E}\kle{\klr{\int_0^t b(s,X_s^N) \mathrm{d}s}^2} + 4\mathbb{E}[(B_t^{H_2})^2] + \frac{4}{N^2} \mathbb{E}[(B_t^{H_1})^2] \\
        &\le 4 \klr{x_0^2 + c^2T^2 + T^{2H_2} + \frac{T^{2H_1}}{N^2}} < \infty \,.
    \end{align*}
    Furthermore, we obtain for the second term
    \begin{align*}
        A_2^N(t) &\le 3 \mathbb{E}\kle{\klr{\int_0^t \frac{|\int_s^t b(u,X_u^N) \mathrm{d}u|}{(t-s)^{1+\beta}} \mathrm{d}s}^2} + 3 \mathbb{E}\kle{\klr{\int_0^t \frac{|B_t^{H_2} - B_s^{H_2}|}{(t-s)^{1+\beta}} \mathrm{d}s}^2} \\
        &\qquad + \frac{3}{N^2} \mathbb{E}\kle{\klr{\int_0^t \frac{|B_t^{H_1} - B_s^{H_2}|}{(t-s)^{1+\beta}} \mathrm{d}s}^2} \\
        &\le Cc^2 t^{2-2\beta} + 3 \mathbb{E}\kle{\klr{\int_0^t \frac{|B_t^{H_2} - B_s^{H_2}|}{(t-s)^{1+\beta}} \mathrm{d}s}^2} + \frac{3}{N^2} \mathbb{E}\kle{\klr{\int_0^t \frac{|B_t^{H_1} - B_s^{H_2}|}{(t-s)^{1+\beta}} \mathrm{d}s}^2} \,,
    \end{align*}
    where $C > 0$ denotes some constant. Now we can use the self-similarity of the fBm. The second term satisfies
    \begin{align*}
        A_2^N(t) \le Cc^2 t^{2-2\beta} + 3 \mathbb{E}[|B_1^{H_1}|^2] \klr{\int_0^t \frac{|t-s|^{H_1}}{(t-s)^{1+\beta}} \mathrm{d}s}^2 + \frac{3 \mathbb{E}[|B_1^{H_2}|^2]}{N^2} \klr{\int_0^t \frac{|t-s|^{H_2}}{(t-s)^{1+\beta}}\mathrm{d}s}^2 \,.
    \end{align*}
    Since 
    \begin{align*}
        \int_0^t \frac{|t - s|^H}{(t-s)^{1+\beta}} \mathrm{d}s = \frac{t^{H-\beta}}{H - \beta}
    \end{align*}
    for any $t > 0$, $\beta \in (0,H)$ and $H \in (0,1)$, we can conclude that
    \begin{align*}
        A_2^N(t) \le C\klr{c^2 t^{2-2\beta} + t^{2H_1-2\beta} + t^{2H_2-2\beta}} \le C \klr{c^2 T^{2-2\beta} + T^{2H_1-2\beta} + T^{2H_2-2\beta}} < \infty \,,
    \end{align*}
    i.e.\ $\norm{X^N}_\beta < \infty$ and $X^N \in W^\beta[0,T]$ for all $N \in \nat$.
\end{proof}

We next prove the following assertion.
%
%
%
\begin{lemma}[Tightness]\label{lemma:tightness}
    Assume that condition \condref{cond:B1}{B1} is satisfied and denote by $X^N$, $N\in \nat$, the sequence of solutions of equation \eqref{eq:sequence-sde}. Then, the family $(X^N,\int_0^\cdot b(s,X^N_s)\mathrm{d}s,B^{H_1}/N,B^{H_2})_{N\in\nat}$ is tight in the set of all continuous functions from $[0,T]$ to $\mathbb{R}^4$.
\end{lemma}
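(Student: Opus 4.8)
The plan is to reduce to a coordinatewise statement and then apply Kolmogorov's moment criterion for tightness to each component. First I would recall that an $\mathbb{R}^4$-valued family of continuous processes is tight in $C([0,T],\mathbb{R}^4)$ as soon as each of the four coordinate families is tight in $C([0,T],\mathbb{R})$: given $\delta>0$, pick for each coordinate a compact set carrying mass at least $1-\delta/4$ uniformly in $N$, and take the Cartesian product, which is compact and carries mass at least $1-\delta$ by a union bound on the complements. Hence it suffices to treat the four scalar families separately, and for each I would use the moment form of Kolmogorov's criterion \citewithin{Theorem 21.42}{klenke2013probability}: a bound $\sup_N \mathbb{E}[|Z^N_t - Z^N_s|^p] \le C|t-s|^{1+\gamma}$ for some $p,\gamma>0$, together with tightness of the (here deterministic) initial values, yields tightness. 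The single point to monitor throughout is that every constant be independent of $N$.

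The drift coordinate is the easiest: by \condref{cond:B1}{B1} we have $\big|\int_s^t b(u,X^N_u)\,\mathrm{d}u\big|\le c|t-s|$ pathwise, so the processes $\int_0^\cdot b(s,X^N_s)\,\mathrm{d}s$ are uniformly $c$-Lipschitz; the family is equicontinuous and tightness is immediate (Kolmogorov with $p=2$, $\gamma=1$, or Arzel\`a--Ascoli directly). For the two fractional coordinates I would use that $B^{H_i}_t - B^{H_i}_s$ is centered Gaussian with variance $|t-s|^{2H_i}$, so that $\mathbb{E}[|B^{H_i}_t - B^{H_i}_s|^p] = c_p\,|t-s|^{pH_i}$ for every $p\ge 1$; choosing $p>1/H_i$ gives $pH_i>1$ and Kolmogorov applies. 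The scaling factor $1/N\le 1$ on $B^{H_1}$ only improves the estimate, so the bound is uniform in $N$ (and $B^{H_2}$ does not depend on $N$, being a single tight law).

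The remaining coordinate $X^N$ is handled by combining the three pieces. Writing $X^N_t - X^N_s = \int_s^t b(u,X^N_u)\,\mathrm{d}u + \tfrac1N\big(B^{H_1}_t - B^{H_1}_s\big) + \big(B^{H_2}_t - B^{H_2}_s\big)$ and using $|a+b+c|^p\le 3^{p-1}(|a|^p+|b|^p+|c|^p)$, I would estimate
$$
\mathbb{E}\big[|X^N_t - X^N_s|^p\big] \le C_p\Big(c^p|t-s|^p + \tfrac{c_p}{N^p}|t-s|^{pH_1} + c_p|t-s|^{pH_2}\Big).
$$
Since $|t-s|\le T$ and $H_1\le 1/2<H_2<1$, the smallest exponent is $pH_1$; bounding the higher powers of $|t-s|$ against $|t-s|^{pH_1}$ (absorbing powers of $T$ into the constant) and using $1/N^p\le 1$ yields $\sup_N \mathbb{E}[|X^N_t - X^N_s|^p] \le C|t-s|^{pH_1}$ with $C=C(c,T,p,H_1,H_2)$ independent of $N$. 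Taking $p>1/H_1$ makes $pH_1>1$, so Kolmogorov's criterion applies to $X^N$ as well, completing the coordinatewise verification.

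The argument is essentially routine; the only things requiring care are ensuring that every constant is uniform in $N$ — which holds precisely because the stabilizing term carries the factor $1/N\le 1$ — and recognizing that the binding H\"older exponent is the \emph{smaller} index $H_1$, so that $p$ must be chosen large enough that $pH_1>1$. I do not anticipate a genuine obstacle beyond this bookkeeping.
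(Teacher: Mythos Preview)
Your proposal is correct and follows essentially the same approach as the paper: both verify Kolmogorov's moment criterion for tightness uniformly in $N$, with the binding regularity coming from the smaller Hurst index $H_1$. The only minor difference is that the paper obtains the increment bound for $X^N$ via a pathwise estimate $|X^N_t-X^N_s|\le c|t-s|+A(B^{H_1},B^{H_2})|t-s|^\gamma$ (using that the random H\"older constant $A$ of the two fBms has moments of all orders), whereas you compute $\mathbb{E}[|X^N_t-X^N_s|^p]$ directly from the Gaussian increment moments; this is a cosmetic variation rather than a different argument.
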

\begin{proof}
First, observe that for \(0 < s < t < T\), by assumption \condref{cond:B1}{B1} we have
\[
    |X^N_t - X^N_s| \leq c|t-s| + A(B^{H_1},B^{H_2})\, |t-s|^\gamma,
\]
for every \(\gamma < H_1\), uniformly in \(N\). Here, \(A(B^{H_1},B^{H_2})\) is defined as the maximum of the Hölder constants of the two fractional Brownian motions \((B_t^{H_1})_{t\in [0,T]}\) and \((B_t^{H_2})_{t\in [0,T]}\). In particular, it has moments of arbitrary order.

Next, note that the process
\[
    \int_0^t b(s,X^N_s) \, \mathrm{d} s, \quad t\in [0,T],
\]
is uniformly bounded. Thus, the assertion is a consequence of Kolmogorov's criterion for tightness.
\end{proof}

Having established tightness, we next apply Skorohod's representation theorem to a convergent subsequence and obtain the following result.

\begin{proposition}[Almost sure convergence]\label{prop:as-convergence}
    Under the condition \condref{cond:B1}{B1}, there exists a probability space $(\tilde{\Omega},\tilde{\mathcal{A}},\tilde{\mathbb{P}})$, a family of stochastic processes $(\tilde{X}^N,\int_0^\cdot {b}(s,\tilde{X}_s^N) \mathrm{d}s, \tilde{B}^{H_1}/N, \tilde{B}^{H_2})_{N\in\nat}$ and a subsequence $(N_k)_{k \in \nat}$ such that
    \begin{align*}
        (X^{N_k},\int_0^\cdot {b}(s,{X}_s^{N_k}) \mathrm{d}s,B^{H_1}/N_k,B^{H_2}) \overset{d}{=} (\tilde{X}^{N_k},\int_0^\cdot {b}(s,\tilde{X}_s^{N_k})\mathrm{d}s,\tilde{B}^{H_1}/N_k,\tilde{B}^{H_2})
    \end{align*}
    for all $k \in \nat$. Moreover, the family converges almost surely with respect to the subsequence and satisfies the SDE
    \begin{align*}
        \tilde{X}_t^{N_k} = x_0 + \int_0^t {b}(s,\tilde{X}_s^{N_k}) \mathrm{d}s + \frac{1}{N_k}\tilde{B}_t^{H_1} + \tilde{B}_t^{H_2}, \quad t \in [0,T], 
    \end{align*}
    for all $k \in \nat$.
\end{proposition}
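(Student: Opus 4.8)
The plan is to combine the tightness from Lemma~\ref{lemma:tightness} with Prokhorov's theorem and Skorohod's representation theorem. Since $C([0,T],\mathbb{R}^4)$ is a Polish space and the family $(X^N,\int_0^\cdot b(s,X^N_s)\,\mathrm{d}s,B^{H_1}/N,B^{H_2})_{N\in\nat}$ is tight, Prokhorov's theorem yields a subsequence $(N_k)_{k\in\nat}$ along which the laws converge weakly to some probability measure $\mu$ on $C([0,T],\mathbb{R}^4)$. Applying Skorohod's representation theorem to this weakly convergent sequence, I obtain a probability space $(\tilde\Omega,\tilde{\mathcal{A}},\tilde{\mathbb{P}})$ carrying random elements $\tilde Z^{N_k}=(\tilde X^{N_k},\tilde Y^{N_k},\tilde B^{H_1}/N_k,\tilde B^{H_2})$ such that $\tilde Z^{N_k}\overset{d}{=}(X^{N_k},\int_0^\cdot b(s,X^{N_k}_s)\,\mathrm{d}s,B^{H_1}/N_k,B^{H_2})$ for each $k$, and such that $\tilde Z^{N_k}$ converges almost surely. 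This immediately gives the asserted equality in distribution and the almost sure convergence of the family.

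It remains to show that the Skorohod copies $\tilde Z^{N_k}$ inherit the structural relations defining the original processes, namely that the second coordinate is the drift integral of the first and that the four coordinates satisfy the SDE identity \eqref{eq:sequence-sde}. To this end I would argue that both relations cut out Borel subsets of $C([0,T],\mathbb{R}^4)$ carrying full mass under the law of $(X^{N_k},\dots,B^{H_2})$, hence full mass under the identical law of $\tilde Z^{N_k}$. For the drift relation this requires that the map $\Phi\colon C([0,T])\to C([0,T])$, $x\mapsto\int_0^\cdot b(s,x_s)\,\mathrm{d}s$, be Borel measurable; since $b$ is bounded and measurable under \condref{cond:B1}{B1}, this follows by a standard approximation (e.g.\ by Riemann sums, each a continuous functional of $x$). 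Thus $\{(x,y,u,v):y=\Phi(x)\}$ is Borel, and $\{(x,y,u,v):x=x_0+y+u+v\}$ is closed as the preimage of the constant path $x_0$ under a continuous map. As the original tuple lies in both sets almost surely, so does each $\tilde Z^{N_k}$, giving
\[
    \tilde X_t^{N_k} = x_0 + \int_0^t b(s,\tilde X_s^{N_k})\,\mathrm{d}s + \frac{1}{N_k}\tilde B_t^{H_1} + \tilde B_t^{H_2}, \quad t\in[0,T].
\]

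The main obstacle I anticipate is exactly this transfer of the defining relations to the Skorohod copies: equality in law on path space only transfers almost-sure membership in genuinely measurable events, so the crux is to verify the Borel measurability of $\Phi$ and to package both the drift identity and the additive-noise decomposition as measurable events. Once these measurability points are secured, the distributional identity and the almost sure convergence of the whole $\mathbb{R}^4$-valued family follow directly from Skorohod's theorem, with no further estimates required.
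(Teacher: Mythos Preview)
Your proposal is correct and follows essentially the same approach as the paper: tightness from Lemma~\ref{lemma:tightness}, extraction of a weakly convergent subsequence (Prokhorov), Skorohod's representation theorem, and then transfer of the SDE identity to the new copies via equality in law. The paper handles this last step in one line (``since they coincide in law''), whereas you spell out the measurability argument for $\Phi$ and the additive identity more carefully; this is a welcome elaboration but not a different method.
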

\begin{proof}
    Due to Lemma \ref{lemma:tightness} we know that the family of stochastic processes is tight. Hence, there exists a subsequence of the family $(X^{N_k},\int_0^\cdot {b}(s,{X}_s^{N_k}) \mathrm{d}s,B^{H_1}/N_k,B^{H_2})_{k \in \nat}$ 
    that converges weakly. Therefore, we can apply Skorohod's representation theorem to conclude that there exist a probability space $(\tilde{\Omega},\tilde{\mathcal{A}},\tilde{\mathbb{P}})$ and processes $(\tilde{X}^{N_k},\int_0^\cdot {b}(s,\tilde{X}_s^{N_k})\mathrm{d}s,\tilde{B}^{H_1}/N_k,\tilde{B}^{H_2})_{k \in \nat}$ which converge almost surely such that 
    \begin{align*}
        (X^{N_k},\int_0^\cdot {b}(s,{X}_s^{N_k}) \mathrm{d}s,B^{H_1}/N_k,B^{H_2}) \overset{d}{=} (\tilde{X}^{N_k},\int_0^\cdot {b}(s,\tilde{X}_s^{N_k})\mathrm{d}s,\tilde{B}^{H_1}/N_k,\tilde{B}^{H_2}) .
    \end{align*}
    Since they coincide in law, the family needs to satisfy the SDE 
    \begin{align*}
        \tilde{X}_t^{N_k} = x_0 + \int_0^t {b}(s,\tilde{X}_s^{N_k}) \mathrm{d}s + \frac{1}{N_k}\tilde{B}_t^{H_1} + \tilde{B}_t^{H_2} , \quad t \in [0,T], 
    \end{align*}
    with probability one for all $k \in \nat$.
\end{proof}

From now on, by abuse of notation, we will not distinguish between the family 
$$( X^N, \int_0^\cdot b(s,X^N_s)\,\mathrm{d}s, \frac{B^{H_1}}{N}, B^{H_2} )_{N\in \mathbb{N}}$$
and the version constructed in Proposition \ref{prop:as-convergence}. Fix $t\in (0,T]$ and let $\varepsilon \in (0,t)$. Recall the CGP constructed in equation \eqref{eq:definition-Y} given here by
\begin{align*}
    Y^N_t(\varepsilon) \coloneqq X_{t-\varepsilon}^N + \frac{1}{N}\klr{B_t^{H_1} - B_{t-\varepsilon}^{H_1}} + \klr{B_t^{H_2} - B_{t-\varepsilon}^{H_2}} \,.
\end{align*}
In Lemma \ref{lemma:Y-conditionally-gaussian} we have shown that given $\mathcal{A}_{t-\varepsilon}$, $Y_t^N(\varepsilon)$ is conditionally Gaussian with mean
\begin{align*}
    \xi^N \coloneqq X^N_{t-\varepsilon} + \int_0^{t-\varepsilon} \klr{ \frac{1}{N} \klr{K_{H_1}(t,s) - K_{H_1}(t-\varepsilon,s)} + \klr{K_{H_2}(t,s) - K_{H_2}(t-\varepsilon,s)}}\mathrm{d}W_s 
\end{align*}
and variance
\begin{align}
    (\kappa^N)^2 \coloneqq \int_{t-\varepsilon}^t \klr{\frac{1}{N} K_{H_1}(t,s) + K_{H_2}(t,s)}^2 \mathrm{d}s \,.
    \label{eq:variance-kappa-m}
\end{align}
Furthermore, recall that we can express $X_t^N$ via $Y_t^N(\varepsilon)$ through
\begin{align*}
    X_t^N &= X_{t-\varepsilon}^N + \int_{t-\varepsilon}^t b(s,X_s^N) \mathrm{d}s + \frac{1}{N}\klr{B_t^{H_1} - B_{t-\varepsilon}^{H_1}} + \klr{B_t^{H_2} - B_{t-\varepsilon}^{H_2}} \nonumber \\
    &= Y^N_t(\varepsilon) + \int_{t-\varepsilon}^t b(s,X_s^N) \mathrm{d}s \,.
\end{align*} 
An important observation is that the variance $(\kappa^N)^2$ in \eqref{eq:variance-kappa-m} satisfies
\begin{align*}
    (\kappa^N)^2 \geq \int_{t-\varepsilon}^t \klr{ K_{H_2}(t,s)}^2 \mathrm{d}s \geq C \int_{t-\varepsilon}^t (t-s)^{2H_2 - 1}\mathrm{d}s \geq C \varepsilon^{2H_2}
\end{align*}
for some constant $C\in (0, \infty)$ independent of $N$, where we used the fact that $H_2>1/2$ and \eqref{fbmkernelbound}. Moreover, assumption \eqref{eq:kernel_assumption} is fulfilled and Lemma \ref{lemma:bound-probability-small-interval} applies. This observation will be used in the following proofs.
%
%
%
%
%

%

We now prove that the family above converges towards a weak solution of \eqref{eq:main_sde}, first establishing convergence in \( L^2 \), followed by convergence in the stochastic Besov space.  

\begin{theorem}[Existence of weak solution]\label{thm:existence_main}
    Let us assume that conditions \condref{cond:B1}{B1} and \condref{cond:B2}{B2} are satisfied. Define the stochastic process $X'$ as
    \begin{align}
        X'_t \coloneqq x_0 + \int_0^t b(s,X_s) \mathrm{d}s + B_t^{H_2}\,, \quad t\in [0,T],
        \label{eq:X-prime-sde}
    \end{align}
    where $X$ is defined as the unique limit derived in Proposition \ref{prop:as-convergence} of the subsequence $(X^{N_l})_{l \in \nat}$ of solutions of the SDE \eqref{eq:sequence-sde} on some probability space $(\tilde{\Omega},\tilde{\mathcal{A}},\tilde{\mathbb{P}})$. Then, $X = X'$ almost surely on the probability space $(\tilde{\Omega},\tilde{\mathcal{A}},\tilde{\mathbb{P}})$. Moreover, it holds
    $$X^{N_l} \xrightarrow{L^2} X',$$
    that is $(X^{N_l})_{l \in \nat}$ converges towards $X$ in $L^2$.
\end{theorem}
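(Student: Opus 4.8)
The plan is to work on the probability space $(\tilde\Omega,\tilde{\mathcal A},\tilde{\mathbb P})$ furnished by Proposition~\ref{prop:as-convergence}, suppressing the tildes, where the whole family converges almost surely in $C([0,T];\mathbb R^4)$. Passing to the limit $l\to\infty$ in the identity $X_t^{N_l}=x_0+\int_0^t b(s,X_s^{N_l})\,\mathrm ds+\frac1{N_l}B_t^{H_1}+B_t^{H_2}$, the stabilizing term $\frac1{N_l}B_t^{H_1}$ vanishes while the remaining three components converge, so that $X_t=x_0+A_t+B_t^{H_2}$ almost surely, where $A_t$ denotes the almost sure limit of $\int_0^t b(s,X_s^{N_l})\,\mathrm ds$. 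Comparing with the definition of $X'$, the equality $X=X'$ is therefore equivalent to the identification $A_t=\int_0^t b(s,X_s)\,\mathrm ds$. Everything reduces to passing the limit through the drift, which is the delicate point because $b$ is discontinuous at $a_1,\dots,a_m$.

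To this end I would prove the $L^1$-convergence $\int_0^t b(s,X_s^{N_l})\,\mathrm ds\to\int_0^t b(s,X_s)\,\mathrm ds$; since the left-hand side also converges almost surely to $A_t$, the two limits must then coincide. Fix $\delta>0$ and set $D_\delta=\bigcup_{j=1}^m (a_j-\delta,a_j+\delta)$. I split $\mathbb E\int_0^t|b(s,X_s^{N_l})-b(s,X_s)|\,\mathrm ds$ according to whether $X_s^{N_l}$ or $X_s$ lies in $D_\delta$. On the complement, using the almost sure uniform convergence $\Delta_l:=\sup_{s\le T}|X_s^{N_l}-X_s|\to0$, on the event $\{\Delta_l<\delta/2\}$ the two points $X_s^{N_l},X_s$ must lie in the same interval $I_j$ (a jump point strictly between them would force $|X_s^{N_l}-X_s|\ge2\delta$), so condition \condref{cond:B2}{B2} yields $|b(s,X_s^{N_l})-b(s,X_s)|\le L\Delta_l<L\delta/2$; the complementary event $\{\Delta_l\ge\delta/2\}$ contributes at most $2cT\,\mathbb P(\Delta_l\ge\delta/2)\to0$.

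The main obstacle is the region near the discontinuities. Here I would invoke the uniform variance bound $(\kappa^N)^2\ge C\varepsilon^{2H_2}$ established above, which shows that assumption \eqref{eq:kernel_assumption} holds with parameter $H_2$ and a constant independent of $N$, so that Lemma~\ref{lemma:bound-probability-small-interval} applies to each $X^{N_l}$ uniformly in $l$. This gives $\mathbb E\int_0^t\mathbbm 1_{\{X_s^{N_l}\in D_\delta\}}\,\mathrm ds\le C\delta^{1-H_2}$ uniformly in $l$, and the same bound transfers to the limit $X$ by Fatou's lemma (if $X_s\in D_{\delta/2}$ then $X_s^{N_l}\in D_\delta$ for all large $l$). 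Since $b$ is bounded by $c$ under \condref{cond:B1}{B1}, the near-discontinuity region contributes at most $Cc\,\delta^{1-H_2}$. Collecting the estimates and letting first $l\to\infty$ and then $\delta\to0$ (using $1-H_2>0$) yields the desired $L^1$-convergence, hence $A_t=\int_0^t b(s,X_s)\,\mathrm ds$ and $X=X'$ almost surely.

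Finally, for the $L^2$-convergence I observe that $X_t^{N_l}-X_t'=X_t^{N_l}-X_t\to0$ almost surely. Boundedness of $b$ and the representation $X_t^{N_l}=x_0+\int_0^t b(s,X_s^{N_l})\,\mathrm ds+\frac1{N_l}B_t^{H_1}+B_t^{H_2}$ give $|X_t^{N_l}|\le|x_0|+cT+|B_t^{H_1}|+|B_t^{H_2}|$, whose fourth moment is finite uniformly in $l$ by the Gaussianity of fractional Brownian motion; Fatou's lemma then bounds $\mathbb E|X_t|^4$ as well. Hence the family $\{|X_t^{N_l}-X_t|^2\}_l$ is uniformly integrable, and almost sure convergence upgrades to convergence in $L^2$, uniformly in $t$.
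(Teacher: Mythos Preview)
Your argument is correct and hinges on the same key input as the paper—Lemma~\ref{lemma:bound-probability-small-interval} with the uniform-in-$N$ variance bound $(\kappa^N)^2\ge C\varepsilon^{2H_2}$—but the overall structure differs. The paper estimates $A_1^{N_l}(t)=\mathbb E[(X_t^{N_l}-X_t')^2]$ directly: after bounding $(1/N_l)^2\,\mathbb E[(B_t^{H_1})^2]$, it splits the squared drift integral according to whether $X_s^{N_l}$ and $X_s$ lie on the same side of the discontinuity. On the ``same side'' event the piecewise Lipschitz bound feeds back a term $\int_0^t A_1^{N_l}(s)\,\mathrm ds$; on the ``opposite side'' event a further split on $\{|X_s^{N_l}-X_s|\ge 1/n\}$ versus $\{0<X_s^{N_l}<1/n\}$ together with Lemma~\ref{lemma:bound-probability-small-interval} yields $K_2 n^{H_2-1}+K_2\int_0^t\mathbb P(|X_s^{N_l}-X_s|\ge 1/n)\,\mathrm ds$. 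Gronwall closes the inequality, and one sends $l\to\infty$ (using the almost sure convergence of Proposition~\ref{prop:as-convergence}) and then $n\to\infty$. By contrast, you bypass Gronwall entirely: you first identify $X=X'$ through $L^1$-convergence of the drift integral via the $D_\delta$-splitting, then upgrade the already-available almost sure convergence to $L^2$ by uniform integrability. Your route is more elementary and makes transparent that the only obstruction is passing the limit through the drift near the jump points; the paper's route is more quantitative (the intermediate bound has the explicit shape $K\,n^{H_2-1}$) and dovetails with the subsequent Besov-norm estimate in Proposition~\ref{prop:convergence-besov}, whose proof reuses the same Gronwall scheme. One small cosmetic point: your Fatou step literally bounds $\int_0^t\mathbb P(X_s\in D_{\delta/2})\,\mathrm ds$ rather than $\int_0^t\mathbb P(X_s\in D_{\delta})\,\mathrm ds$; either adjust the split for $X$ to use $D_{\delta/2}$, or simply halve $\delta$ at the end—nothing changes.
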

\begin{proof}
    In the following, we can assume without loss of generality that the drift function $b$ has only one discontinuity point in $a_1 = 0$. By abuse of notation, we denote $\tilde{\mathbb{P}}=\mathbb{P}$ and accordingly don't distinguish notation between expected values. We need to show that
    \begin{align*}
        \lim_{l \to \infty} \mathbb{E}\kle{\klr{X_t^{N_l} - X_t'}^2} = 0 
    \end{align*}
    for any $t \in [0,T]$. Let
    \begin{align*}
        A_1^{N_l}(t) \coloneqq \mathbb{E}\kle{\klr{X_t^{N_l} - X_t'}^2} \,.
    \end{align*}
    Since $X^{N_l}$ and $X'$ fulfill the equations \eqref{eq:sequence-sde} and \eqref{eq:X-prime-sde}, we can conclude that $A_1^{N_l}(t) \le 2\klr{I_1 + I_2}$ with
    \begin{align*}
        I_1 &= \mathbb{E}\kle{\klr{\int_0^t \klr{b(s,X_s^{N_l}) - b(s,X_s)} \mathrm{d}s}^2}\,, \\
        I_2 &= \frac{1}{N_l^2} \mathbb{E}\kle{(B_t^{H_1})^2}  = \frac{1}{N_l^2} |t|^{2H_1} \,.
    \end{align*}
    Therefore, only the discussion of $I_1$ is left. If $X_s^{N_l},X_s > 0$ or $X_s^{N_l},X_s < 0$, we can make use of the piecewise Lipschitz continuity and argue analogously to the proof of \cite[Theorem 3.2.8]{mishura2008stochastic}. On the other hand, if $X_s^{N_l} > 0$ and $X_s < 0$ (or the other way around), let $A_s \coloneqq \{X_s^{N_l} > 0; X_s < 0\}$ and $f(s,X_s^{N_l},X_s) = b(s,X_s^{N_l}) - b(s,X_s)$. Note that, since $b$ is bounded, $f$ is also bounded. Then, due to the Cauchy-Schwarz inequality we can estimate $I_1$ via
    \begin{align*}
        I_1 \le C\, \mathbb{E}\kle{\int_0^t \mathbbm{1}_{A_s} \mathrm{d}s} + C\, \mathbb{E}\kle{\int_0^t f(s,X_s^{N_l},X_s)^2 \mathbbm{1}_{A_s^c} \mathrm{d}s} \,.
    \end{align*}
    On $A_s^c$ we have, because of \condref{cond:B2}{B2}, $$f(s,X_s^{N_l},X_s)^2 \le L \klr{X_s^{N_l} - X_s}^2.$$ 
    Therefore, we get
    \begin{align*}
        I_1 \le C \int_0^t \mathbb{E}\kle{\mathbbm{1}_{A_s}} \mathrm{d}s + LC \int_0^t \mathbb{E}\kle{\klr{X_s^{N_l} - X_s}^2} \mathrm{d}s \,.
    \end{align*}
    Now, by definition we have
    \begin{align*}
        \mathbb{E}\kle{\mathbbm{1}_{A_s}} &= \mathbb{P}(X_s^{N_l} > 0, X_s < 0) \nonumber \\
        &\le \mathbb{P}(|X_s^{N_l} - X_s| \ge 1/n, X_s^{N_l} > 0, X_s < 0) + \mathbb{P}(- 1/n < X_s < 0, 0 < X_s^{N_l} < + 1/n)  \\
        &\le \mathbb{P}(|X_s^{N_l} - X_s| \ge 1/n) + \mathbb{P}(- 1/n < X_s < 0, 0 < X_s^{N_l} < + 1/n)
    \end{align*}
    for any $n \in \nat$. We estimate the second term as
    \begin{align*}
        \int_0^t\mathbb{P}(- 1/n < X_s < 0, 0 < X_s^{N_l} < + 1/n) \mathrm{d}s \le \int_0^t \mathbb{P}(X_s^{N_l} \in (0, 1/n)) \mathrm{d}s 
    \end{align*}
    and apply Lemma \ref{lemma:bound-probability-small-interval} to see that
    \begin{align*}
        \int_0^t\mathbb{P}(- 1/n < X_s < 0, 0 < X_s^{N_l} < + 1/n) \mathrm{d}s \le K_1 n^{H_2 - 1}
    \end{align*}
    for some $K_1 > 0$ independent of $n$. In total we have
    \begin{align*}
        A_1^{{N_l}}(t) \le \frac{K_2}{{N_l}} |t|^{2H_1} + K_2 n^{H_2-1} + K_2 \int_0^t \mathbb{P}(|X_s^{N_l} - X_s| \ge 1/n) \mathrm{d}s + K_2 \int_0^t A_1^{{N_l}}(s) \mathrm{d}s\,,
    \end{align*}
    where $K_2 > 0$ is a constant independent of $n,{N_l}$. Now we can apply Gronwall's lemma \cite{gronwall1919note} to get
    \begin{align*}
        A_1^{{N_l}}(t) \le K_3\kle{\frac{1}{{N_l}} |t|^{2H_1} + n^{H_2-1} + \int_0^t \mathbb{P}(|X_s^{N_l} - X_s| \ge 1/n) \mathrm{d}s} \,.
    \end{align*}
    If we let $l \to \infty$ we get by Proposition \ref{prop:as-convergence} and due to the definition of $X_s$ that
    \begin{align*}
        \limsup_{l\to \infty }A_1^{N_l}(t)\leq K_3 n^{H_2-1}  \,.
    \end{align*}
    Since this holds for every sufficently large $n\in \nat$, letting $n \to \infty$ we get
    \begin{align*}
        A_1^{N_l}(t) \to 0 \,,
    \end{align*}
    i.e., $X^{N_l} \xrightarrow{L^2} X'$. Moreover, due to $X^{N_l} \xrightarrow{\text{a.s.}} X$, we have $X_t = X_t'$ for all $t \in [0,T]$, with probability one. This completes the proof. 
\end{proof}

Finally, we give a proof for the convergence in the stochastic Besov space.

\begin{proposition}[Convergence in stochastic Besov space]\label{prop:convergence-besov}
    Suppose that the same conditions as in Theorem \ref{thm:existence_main} are satisfied. Then the sequence $(X^{N_l})_{l\in \nat}$ converges towards $X$ with respect to the norm $\norm{\cdot}_\beta$ of the space $W^\beta[0,T]$ for any $\beta \in (0,H_1)$.
\end{proposition}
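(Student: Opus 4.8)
The plan is to upgrade the $L^2$-convergence obtained in Theorem~\ref{thm:existence_main} to convergence in the stronger norm $\norm{\cdot}_\beta$. Writing $Z_t^{N_l} := X_t^{N_l} - X_t$, it suffices by definition of the norm to show that both $\sup_{t}\mathbb{E}[(Z_t^{N_l})^2]$ and $\sup_{t}\mathbb{E}\kle{\klr{\int_0^t |Z_t^{N_l} - Z_s^{N_l}|(t-s)^{-1-\beta}\,\mathrm{d}s}^2}$ tend to zero. The first quantity is exactly $\sup_{t}A_1^{N_l}(t)$, which converges to zero by the (uniform in $t$) estimate established in the proof of Theorem~\ref{thm:existence_main}; I set $\delta_l := \klr{\sup_{t}\mathbb{E}[(Z^{N_l}_t)^2]}^{1/2}\to 0$. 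As there, I would assume without loss of generality a single discontinuity at $0$.

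Since $X=X'$ almost surely, the $B^{H_2}$-increments cancel and the increment of the difference is $Z_t^{N_l} - Z_s^{N_l} = \int_s^t \klr{b(u,X_u^{N_l}) - b(u,X_u)}\,\mathrm{d}u + \tfrac{1}{N_l}\klr{B_t^{H_1} - B_s^{H_1}}$. I treat the two summands separately via Minkowski's inequality. For the fBm term, self-similarity together with the computation $\int_0^t (t-s)^{H_1-1-\beta}\,\mathrm{d}s = t^{H_1-\beta}/(H_1-\beta)$ already used in Proposition~\ref{prop:X-in-besov} (legitimate since $\beta<H_1$) produces a contribution whose $L^2(\mathbb{P})$-norm is of order $N_l^{-1}$, uniformly in $t$, hence vanishing.

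The main work is the drift term. Following the splitting in Theorem~\ref{thm:existence_main}, on the set $A_u^c$ where $X_u^{N_l}$ and $X_u$ lie on the same side of $0$, condition~\condref{cond:B2}{B2} gives $|b(u,X_u^{N_l}) - b(u,X_u)|\le L|Z_u^{N_l}|$, while on the crossing set $A_u$ I use boundedness, $|b(u,X_u^{N_l}) - b(u,X_u)|\le 2\norm{b}_\infty$. The Lipschitz part is controlled by two applications of Minkowski together with $\norm{Z_u^{N_l}}_{L^2}\le\delta_l$ and $\int_0^t (t-s)^{-\beta}\,\mathrm{d}s\le T^{1-\beta}/(1-\beta)$, giving a bound of order $\delta_l$. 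For the crossing part I swap the order of integration (Fubini) to reduce $\int_0^t (t-s)^{-1-\beta}\int_s^t \mathbbm{1}_{A_u}\,\mathrm{d}u\,\mathrm{d}s$ to $\tfrac1\beta\int_0^t (t-u)^{-\beta}\mathbbm{1}_{A_u}\,\mathrm{d}u$, whose $L^2(\mathbb{P})$-norm is then bounded, again by Minkowski, by $\tfrac1\beta\int_0^t (t-u)^{-\beta}\mathbb{P}(A_u)^{1/2}\,\mathrm{d}u$. Exactly as in Theorem~\ref{thm:existence_main}, Chebyshev's inequality and Lemma~\ref{lemma:bound-probability-small-interval} (applicable because $(\kappa^N)^2\ge C\varepsilon^{2H_2}$, so that \eqref{eq:kernel_assumption} holds with $H=H_2$) yield $\mathbb{P}(A_u)\le n^2\delta_l^2 + C\,u^{-H_2}n^{H_2-1}$ for every $n\in\nat$; the resulting integrals $\int_0^t (t-u)^{-\beta}\,\mathrm{d}u$ and $\int_0^t (t-u)^{-\beta}u^{-H_2/2}\,\mathrm{d}u$ are finite and bounded uniformly in $t\le T$ since $\beta<H_1\le\tfrac12$ and $H_2/2<1$. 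Thus the crossing contribution is at most $C\klr{n\delta_l + n^{(H_2-1)/2}}$, uniformly in $t$.

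Combining the three estimates gives $\sup_{t}\mathbb{E}\kle{\klr{\int_0^t |Z_t^{N_l} - Z_s^{N_l}|(t-s)^{-1-\beta}\,\mathrm{d}s}^2}^{1/2}\le C\klr{N_l^{-1} + \delta_l + n\delta_l + n^{(H_2-1)/2}}$ for every $n$. Letting first $l\to\infty$ (so that $\delta_l\to 0$) and then $n\to\infty$ (using $H_2<1$) forces the left-hand side to zero, which together with $\delta_l\to 0$ yields $\norm{X^{N_l}-X}_\beta\to 0$. I expect the delicate point to be the treatment of the crossing set $A_u$: the singular Besov weight $(t-s)^{-1-\beta}$ must be reconciled, uniformly in $t$, with the only available control on $\mathbb{P}(A_u)$, namely the Krylov-type bound of Lemma~\ref{lemma:bound-probability-small-interval}. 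This is precisely what dictates both the Fubini rearrangement and the order of the double limit in $l$ and $n$.
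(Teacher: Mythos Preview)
Your proof is correct and follows essentially the same route as the paper's: the same decomposition of the increment into drift and fBm parts, the same crossing/non-crossing splitting of the drift with the Fubini rearrangement to absorb the Besov weight, the same control of $\mathbb{P}(A_u)$ via Chebyshev plus Lemma~\ref{lemma:bound-probability-small-interval}, and the same double limit in $l$ and then $n$. The only cosmetic difference is that you use Minkowski's inequality where the paper uses Cauchy--Schwarz on the squared quantity, leading to the exponent $n^{(H_2-1)/2}$ in place of $n^{H_2-1}$, but this does not alter the argument.
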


\begin{proof}
     We can assume again without loss of generality that the drift function $b$ has only one discontinuity point in $a_1 = 0$. We need to show that
    \begin{align*}
        \lim_{l \to \infty} \norm{X_t^{N_l} - X_t}_\beta = 0 \,
    \end{align*}
    for any $t \in [0,T]$. Let us consider
    \begin{align*}
        A_1^{N_l}(t) + A_2^{N_l}(t) \coloneqq \mathbb{E}\kle{\klr{X_t^{N_l} - X_t}^2} + \mathbb{E}\kle{\klr{\int_0^t \frac{\kls{X_t^{N_l} - X_t - X_s^{N_l} + X_s}}{(t-s)^{1+\beta}} \mathrm{d}s}^2} \,.
    \end{align*}
    We have already shown in Theorem \ref{thm:existence_main} that $A_1^{N_l}(t)$ converges to zero for $l \to \infty$, i.e., the discussion of the second term is left. Similar to the proof of \cite[Theorem 3.2.7]{mishura2008stochastic}, we have $A_2^{N_l}(t) \le C\klr{I_3 + I_4}$, where
    \begin{align*}
         I_3 &= \mathbb{E}\kle{\klr{\int_0^t \frac{\int_s^t \klr{b(u,X_u^{N_l}) - b(u,X_u)} \mathrm{d}u}{(t-s)^{1+\beta}} \mathrm{d}s}^2}\,, \\
         I_4 &= \klr{\frac{1}{N_l}}^2 \mathbb{E}\kle{\klr{\int_0^t \frac{|B_t^{H_1} - B_s^{H_1}|}{(t-s)^{1+\beta}} \mathrm{d}s}^2}\,.
    \end{align*}
    Analogously to the discussion of integral $I_1$ above, we can estimate $I_3$ by
    \begin{align*}
        I_3 &\le D \mathbb{E}\kle{\klr{\int_0^t \frac{\int_s^t \mathbbm{1}_{A_u} \mathrm{d}u}{(t-s)^{1+\beta}}\mathrm{d}s}^2} + D \mathbb{E}\kle{\klr{\int_0^t \frac{\int_s^t |{X_u^{N_l} - X_u}| \mathrm{d}u}{(t-s)^{1+\beta}} \mathrm{d}s}^2} \,.
    \end{align*}
    To further estimate $I_3$ and $I_4$, let
    \begin{align*}
        I_5\kle{h(\cdot)} \coloneqq \int_0^t \int_s^t (t-s)^{-1-\beta} h(u) \mathrm{d}u \mathrm{d}s \,.
    \end{align*}
    Then, using Fubini, we can estimate it as 
    \begin{align*}
        I_5[h(\cdot)] \le \int_0^t (t-u)^{-\beta} h(u) \mathrm{d}u \,.
    \end{align*}
    Now we can apply Cauchy-Schwarz and the Markov inequality and argue similarly as for the integral $I_1$ defined in the proof of Theorem \ref{thm:existence_main} to estimate $I_3$ as
    \begin{align*}
        I_3 &\le D_1 n^2 \int_0^t (t-s)^{-2\beta} A_1^{N_l}(s) \mathrm{d}s + D_2 n^{H_2-1} \int_0^t (t-s)^{-2\beta} \mathrm{d}s \,.
    \end{align*}
    The term $I_4$ can be estimated as in the proof of Proposition \ref{prop:X-in-besov} by using the self-similarity of the fBm. Therefore, in total we have 
    \begin{align*}
        A_2^{N_l}(t) &\le B_1 \klr{\frac{1}{N_l}}^2 + B_2 n^{H_2-1} \int_0^t (t-s)^{-2\beta} \mathrm{d}s + B_3 n^2 \int_0^t (t-s)^{-2\beta} A_1^{N_l}(s) \mathrm{d}s \,.
    \end{align*}
    As before we can use Gronwall's Lemma, let first $l \to \infty$ and then $n \to \infty$. We obtain the limit $A_2^{N_l}(t) \to 0$, uniformly in $t$, i.e., the sequence $X^{N_l}$ is also convergent towards $X$ in the Besov space.
\end{proof}

\bibliographystyle{plain}
\bibliography{main}

\end{document}